\newtheorem{theorem}{Theorem}
\newtheorem{remark}{Remark}
\newtheorem{lemma}{Lemma}
\newtheorem{proposition}{Proposition}
\newtheorem{corollary}{Corollary}
\newtheorem{definition}{Definition}
\begin{document}
\title[Cable of the torus knots]
{On the colored Jones polynomials\\
       of certain cable of the torus knots}
\author{Qihou Liu}
\email{qihou.liu@yale.edu}
       
\date{July 25,2008}
\maketitle

\begin{abstract}
In this paper, we study the asymptotic behavior of the colored Jones polynomials evaluated at roots of unity for a special class of knots. We show that certain limit is zero as predicted by the volume conjecture.

\end{abstract}

\section{Introduction}\label{intro}
In\cite{KT}, Kashaev obtained an explicit expansion of the colored Jones polynomials evaluated at roots of unity for the torus knots. As a consequence, it is shown that the volume conjecture holds for such knots, ---that is:
\[\lim_{N\to \infty} \frac{\log\left|\left(\frac{J_N^\sigma
(A,T)}{[N+1]}\right)_{A=\exp\left(\frac{\pi i}{2(N+1)}\right)}\right|}N=0\]

Using a formula from Morton\cite{MO}, one can show that the quantity inside $\log$ function is bounded from above by a polynomial of $N$. To complete the argument, however, one need to bound this quantity from below. In a recent preprint by Roland van der Veen~\cite{VV}, the author claims a proof for those knots with volume zero. An upper bound is obtained. We still need to see how the lower bound would be treated. In this paper, we extend Kashaev's method to give an expansion of colored Jones polynomials for certain cable of the torus knots.
  Then we show that the above limit is zero based on this estimation.  The whole paper is organized as follows. The next two sections contain some definitions and a formula we shall use---without proof. Section 4 states the main theorem and its corollary. In section 5 and 6 we carry out evaluation of the polynomials and some estimations. The proof of the main theorem is finished in section 7.

\section{Basic definitions and notations}\label{def}

In this section we list some basic definitions and notations. For
details, see Lickorish's book\cite{LBook} and \cite{BHMV}. For a
survey of the skein theory and some generalization,
see\cite{LSurvey}.

Given a compact oriented 3-manifold $M$, and a ring of coefficients
$R=Z[A,A^{-1}]$, define a $R-module$ by
\[SS(M)=the\ free\ module\ generated\ by\ framed\ links\ in\ M\]
where two framed links isotopic to each other are regarded as the same one.

\begin{definition} The skein module associated to $M$ is
\[S(M)= SS(M)/the\ Kauffman\ skein\ relations\]
\end{definition}

The skein relations are the following:

\[\scalebox{.15}{\psfig{file=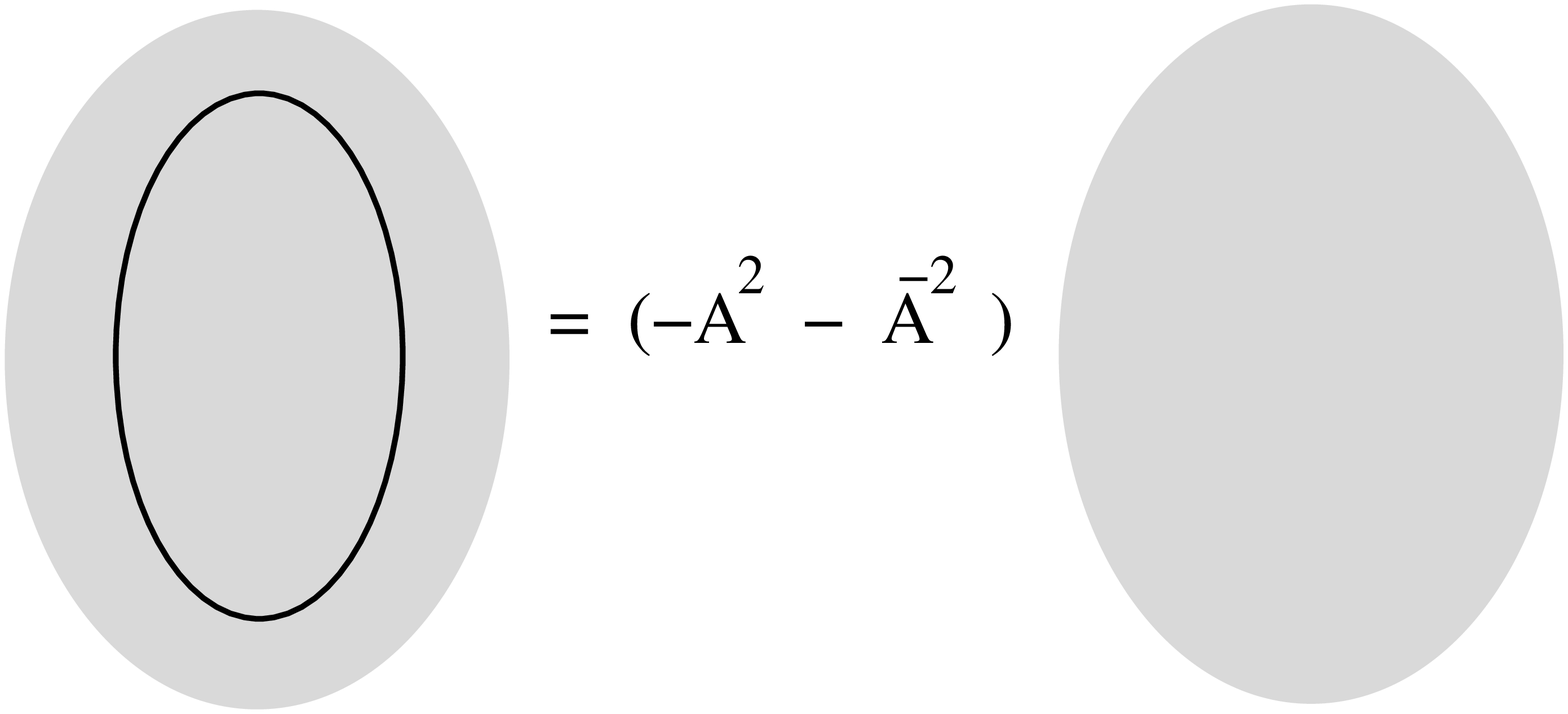}}\]

\[\scalebox{.15}{\psfig{file=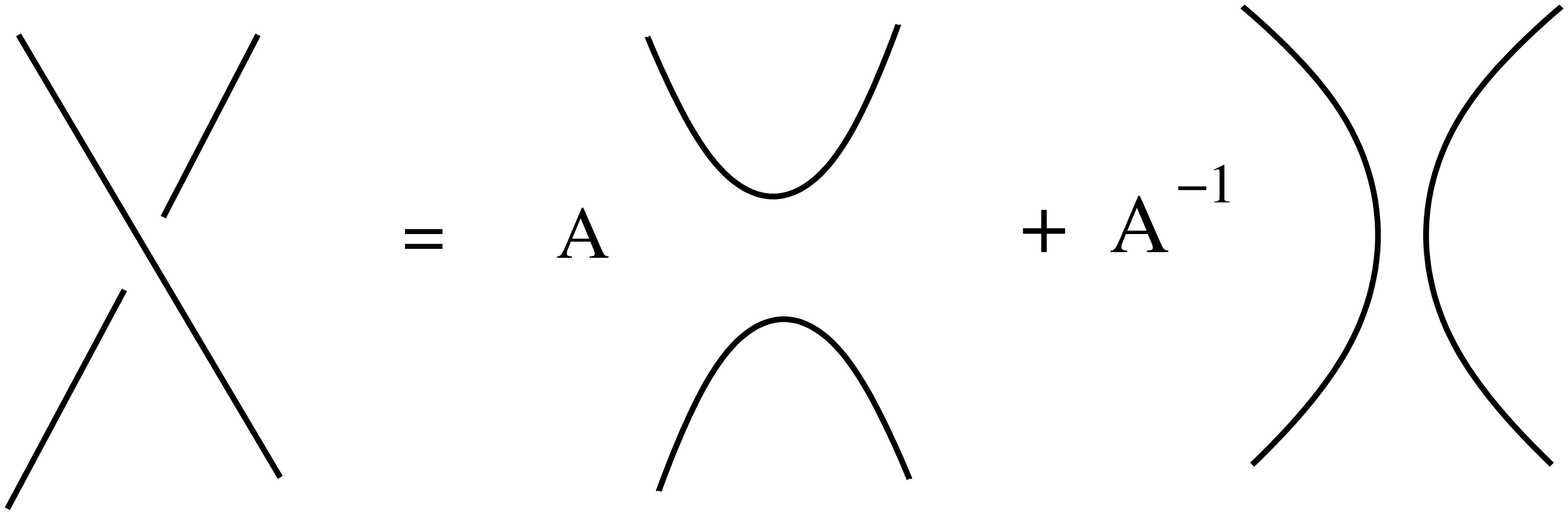}}\]

\subsection{Examples}\label{examples}
We list two examples here.

\subsubsection{The skein module of $S^3$}\label{three sphere}
The skein module of $S^3$ is just equal to $Z[A,A^{-1}]$.--- Given a planar diagram of a famed link $L$ in $S^3$, by applying the skein relation repeatedly to resolve the crossings and remove the unknots, $L$ can be reduced to the empty link multiplied by a Laurent polynomial of $A$. In this way, we associate a polynomial to each framed link in $S^3$. Multiplied by a normalization factor, we will get a polynomial which is independent of the framing of $L$. Up to change of variables, this is just the Jones polynomial of $L$.

\subsubsection{The skein module of $D\times S^1$}\label{solid torus}
The situation in the solid torus is a bit different. Using the skein
relations, we can not reduce every link to the empty one. Certain
essential links remain. And $S(D\times S^1)$ is a free  $R-module$
generated by those elements represented by parallel strings along
the core of the solid torus, see the pictures below.
\[\scalebox{.35}{\psfig{file=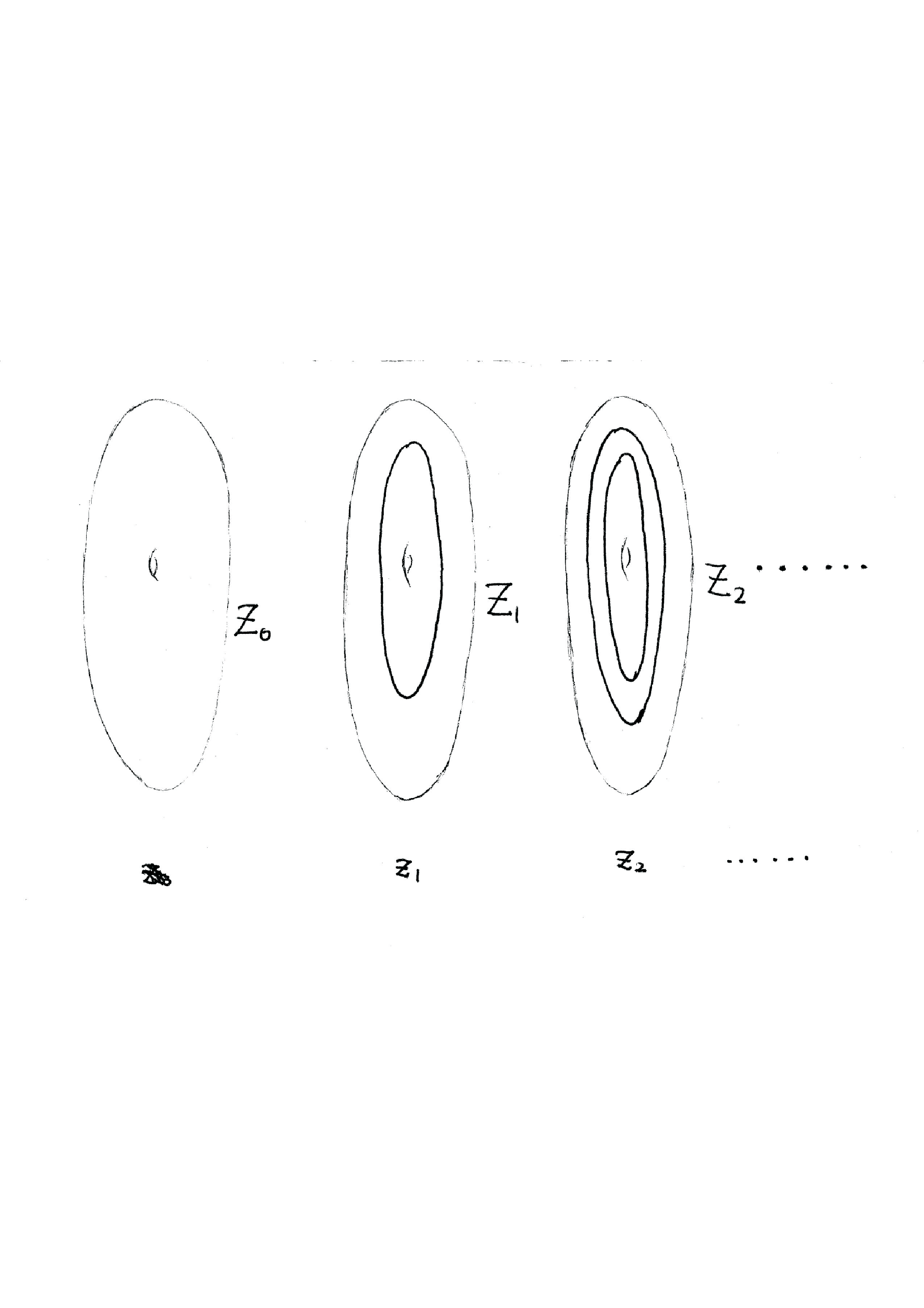}}\]
\subsubsection{The algebra structure of $S(D\times S^1)$ and an alternative basis}\label{algebra and new basis}
The skein module of the standard solid torus $D\times S^1$ has an
algebra structure. Let's define the multiplication now. Fix a curve
on $\partial(D\times S^1)$, say $\{1\}\times S^1$ --- here we
identify $D$ with the unit disk in the complex plane, call it $C_+$.
Pick another curve on $\partial(D\times S^1)$ parallel to $C_+$, say
$\{-1\}\times S^1$, call it  $C_-$. Pick two copies of $D\times
S^1$, glue the first one to the second one by gluing a neighborhood
of $C_+$ in the boundary from the first copy to a neighborhood of
$C_-$ in the boundary from the second copy, we have a new solid
torus. The multiplication of two elements from $S(D\times S^1)$ is
obtained by juxtaposition of two skeins. See the picture below.
\[\scalebox{.45}{\psfig{file=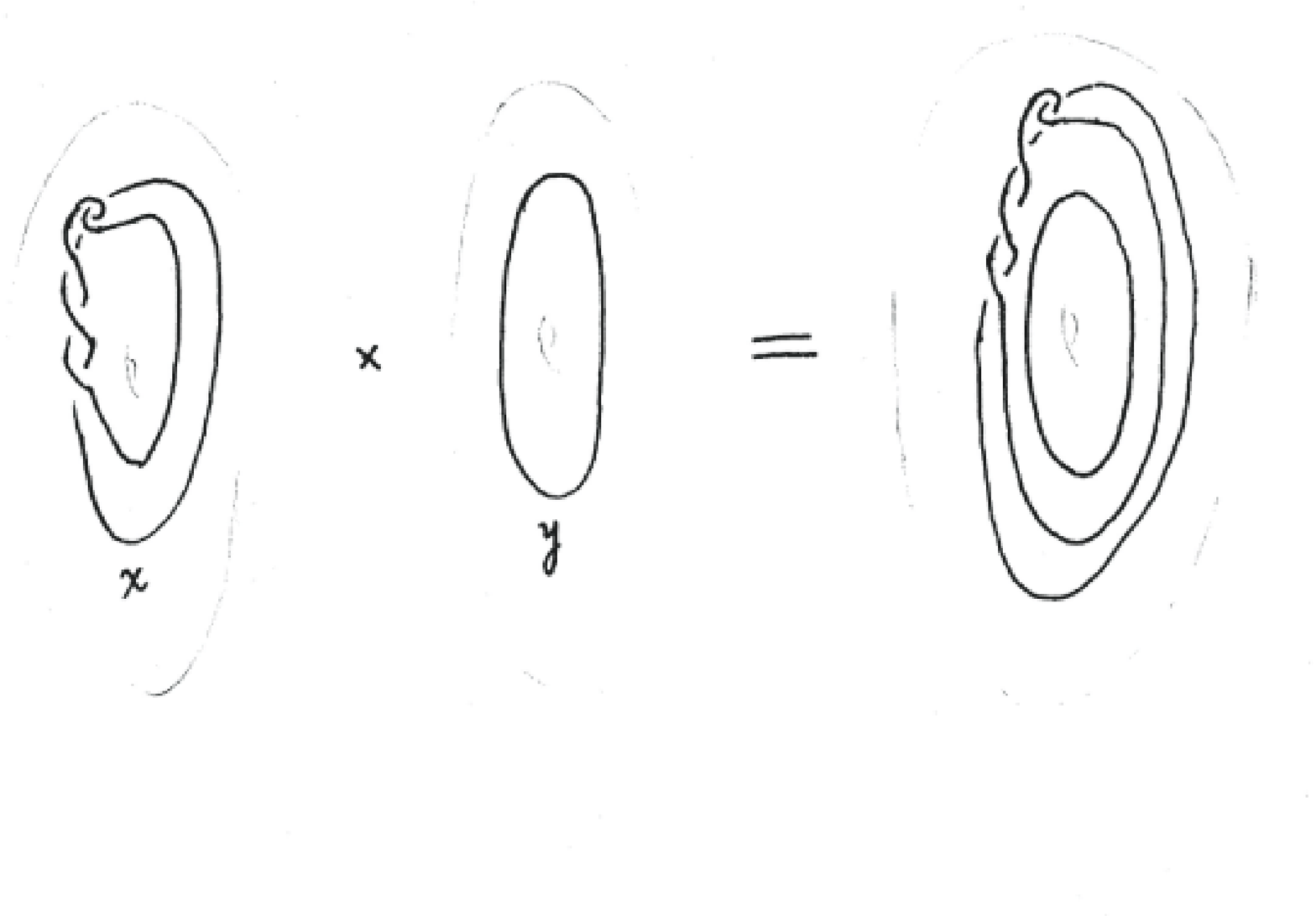}} \]
With this
multiplication, $S(D\times S^1)$ become a commutative $R-algebra$.
It is isomorphic to $R[z_1]$, where $z_1$ is one of the basis
elements described above. We also have $z_n=z_1^n$.

Then we define the following elements $\{e_n|n\ integers\}$ of $S(D\times S^1)$ by $e_0=1,e_1=z_1$ and $z_1e_n=e_{n-1}+e_{n+1}$.
These relations determine $e_n$ for $n\ge 0$ as well as those $e_n$ where $n<0$. And the set $\{e_n|n\ge 0 \}$ is a basis of
$S(D\times S^1)$, since each $e_n$ is a monic polynomial of $z_1$ with degree n. We also have the relation $e_n=-e_{-n-2}$. ---This can be checked by induction.

Next let's describe the multiplication using this new basis. First let's introduce a new definition.
\begin{definition} A triple of nonnegative integers $(l,m,n)$ is called admissible if $l+m+n$ is even and $(l,m,n)$ satisfies the triangle inequality, that is \[ |m-n|\le l \le m+n\]
\end{definition}

Then for $m,n$ nonnegative, we have
\[ e_m e_n=\sum_{(l,m,n)\ admissible} e_l\]
This can also be checked by induction on n.

\subsection{The multilinear bracket}\label{multilinear bracket}
Given an framed link $L$ in $M$ with components $K_1,K_2,...K_m$,
say $K_j$ with framing $\sigma_j$, we can construct multilinear
bracket $\langle *,*,...,*\rangle_L$ in $S(M)$ as following:

the framing $\sigma_j$ of $K_j$ determines a diffeomorphism
\[\tau_j:D\times S^1 \to a\ tubular\ neighbourhood\ of\ K_j\] view
$\tau_j$ as a map $D\times S^1 \to M$, it induces a homomorphism
\[(\tau_j)_*:S(D\times S^1) \to S(M)\]
these $(\tau_j)_*$ combine
together to give us a multi-linear map \[\tau_*: S(D\times
S^1)\times S(D\times S^1)\times ...\times S(D\times S^1)\to S(M)\]
where the product contains $m$ factors. Define $\langle
x_1,x_2,...,x_m\rangle_L$ to be $\tau_*(x_1,x_2,...,x_m)$.

If each component $L$ is colored by some integer, say $K_j$ is colored by $n_j$, then the colored framed link $L$ give us an element in $S(M)$, that is $\langle e_{n_1},e_{n_2},...,e_{n_m}\rangle_L$.

\begin{remark}
If we change the framing of certain components of $L$, while keep the colors unchanged, say $\sigma_1$ is changed to $\sigma_1+b$, then
\[\langle e_{n_1},e_{n_2},...,e_{n_m}\rangle_{L'}=(-1)^{bn_1}A^{b(n_1^2+2n_1)}\langle e_{n_1},e_{n_2},...,e_{n_m}\rangle_L\]
where $L'$ is the same link as $L$ with changed framings.
\end{remark}

\subsubsection{Examples}\label{examples}
If $K$ is a knot in $S^3$ with framing $\sigma$, the n-th colored Jones polynomial of K --$J_n^{\sigma}(A,K)$-- is defined to be $\langle e_n
\rangle_K$,which is an element of $S(S^3)$. Since $S(S^3)$ is just equal to $Z[A,A^{-1}]$, this element is actually a Laurent polynomial of A.  In particular,
\[\langle e_n\rangle_U=(-1)^n[n+1]\]
where $U$ is the unknot with framing zero, and $[m]=\frac{A^{2m}-A^{-2m}}{A^2-A^{-2}}$.

\subsubsection{The framing convention}
Since the linking number of two knots in oriented $S^3$ is defined, the framing of a knot in $S^3$ can be specified by an integer--- the linking number. So for a knot $K$ in $S^3$, we can also say $K$ has framing n, where n is some integer. Among all the framings, zero framing is a special one. If $K$ has framing 0, we will write $J_n(A,K)$ for the colored Jones polynomial instead of $J_n^0(A,K)$.

For an arbitrary $M$, the linking number is not defined, thus the
framing of a knot $K$ in $M$ can not be specified by an integer.
However, under circumstances it is still possible.  For example, $M$
is the solid torus $D\times S^1$ with a chosen curve $C_+$ ---
$\{1\}\times S^1$--- on its boundary. Embed it into $S^3$, then a
framed knot $K$ in $M$  is mapped to a framed knot in $S^3$, and its
framing corresponds to an integer. In general the corresponding
integer will change if we change the embedding. However if we embed
$D\times S^1$ into  $S^3$ in such a way that $C_+$ has linking
number zero with the core of the solid torus after the embedding,
then the corresponding integer remains the same. So we will say a
knot $K$ in $D\times S^1$ has framing n for some integer n in the
sense that the knot is viewed as a knot in $S^3$ through such kind
of embedding of  $D\times S^1$ into $S^3$.

\section{A cable expansion formula}\label{expansion}
\label{expansion}
 In this section, we will follow the framing convention stated previously.

 Let $M$ be the standard solid torus $D \times S^1$, our link $L$ consists of two components:
 $K_1$ is given by \[( \rho \exp(iq\theta), \exp(ip\theta))|_{0\le \theta \le 2\pi}\]
 where $p,q$ are coprime integers, $0<\rho<1$;
 $K_2$ is just the core of the solid torus
 \[(0, \exp(i\theta))|_{0\le \theta \le 2\pi}\]
 Here we identify $D$ with the unit disk in the complex plane and $S^1$ with the unit circle.

 Framings of both components can be described using integers. Fix the framing of $K_2$ to be 0.  The framing $\sigma$ of $K_1$ corresponds to some integer, we also denote it by $\sigma$.  Color $K_2$ by a nonnegative integer $s$, color $K_1$ by a nonnegative integer $N$. Then $\langle e_N,e_s\rangle_L$ is some element of $S(D \times S^1)$, denote it by $T_\sigma^N(p,q;s)$.

 As an element of $S(D \times S^1)$, $T_\sigma^N(p,q;s)$ can be expressed as a linear combination of the basis elements $\{e_l\}|_{l\ge 0}$. The coefficients before $e_l$ is denoted by
 $g_\sigma^l(p,q;s)$, i.e. we have
 \[T_\sigma^N(p,q;s)=\sum_{l\ge 0} g_\sigma^l(p,q;s)e_l\]

 Note we didn't restrict the range of $l$ except that $l\ge 0$. Actually these coefficients
 are all 0 but finitely many of them.
 Further they are all members of the ring of Laurent
 polynomials $Z[A,A^{-1}]$. The exact formula for these coefficients are the following:

 \begin{theorem} Assume $p>0$,let $\epsilon, \beta, \alpha$ be such that $\epsilon=[\frac{q}{p}]$, $q=\epsilon p+\beta$ with
$0\le \beta<p$, $\alpha=p\cdot\beta$.Then for framing $\sigma=pq$, we have
\begin{align} g_\sigma^l(p,q;s)&=(-1)^{qN} A^{\epsilon
(-s^2-2s+l^2+2l)}\notag\\
&\times\sum_{ \substack {k=-N\\ k+N\ even}}^N A^{\alpha {k}^2+2\beta k
(s+1)}\{\delta(l-s-pk)-\delta(l+s+2+pk)\}\notag\end{align}
\end{theorem}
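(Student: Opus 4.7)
The plan is to isotope $K_1$ onto a separating torus, decompose the $(p,q)$-cable \`a la Kashaev, and then fuse the resulting longitudinal push-offs with $K_2$ using the admissibility formula from Section~2. First I would isotope $K_1$ onto the torus $T_\rho = \{|z|=\rho\} \times S^1$, which bounds the sub-solid-torus $V_1 = D_\rho \times S^1$ containing $K_2$. On $T_\rho$, the curve $K_1$ is the standard $(p,q)$-torus knot, and a direct check against the standard embedding $D \times S^1 \hookrightarrow S^3$ shows that the surface framing induced by $T_\rho$ matches exactly our choice $\sigma = pq$.

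Next I would use the division $q = \epsilon p + \beta$ to reduce the $(p,q)$-cable to a $(p,\beta)$-cable by absorbing $\epsilon$ longitudinal revolutions into a framing correction. This is achieved by performing $\epsilon$ Dehn twists on the thickened torus $T_\rho \times I$, and by the framing-change formula in the Remark the total price paid on both the outer basis element (of color $l$) and the inner core (of color $s$) produces the prefactor $A^{\epsilon(l^2 + 2l - s^2 - 2s)}$. What remains is to treat the $(p,\beta)$-cable with surface framing directly.

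The core computation is the expansion of $e_N$ on the $(p,\beta)$-cable, with its surface framing, in terms of longitudinal push-offs of $K_2$. For this I would invoke Morton's cabling formula cited in Section~3, producing a sum over $k \in \{-N, -N+2, \ldots, N\}$ of contributions $A^{p\beta k^2 + 2\beta k(s+1)} = A^{\alpha k^2 + 2\beta k(s+1)}$ times a push-off of $K_2$ colored by $e_{s+pk}$, or, via the identity $e_n = -e_{-n-2}$, its sign-flipped partner. Since both $K_2$ and its push-off are parallel to the core, they multiply in the commutative algebra $S(V) \cong R[z_1]$; applying $e_s \cdot e_{s+pk} = \sum_{(l, s, s+pk) \text{ admissible}} e_l$ and tracking the sign-flipped contribution yields exactly the Kronecker-delta combination $\delta(l - s - pk) - \delta(l + s + 2 + pk)$. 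The global sign $(-1)^{qN}$ finally emerges from the parity contribution of the framing convention on $K_1$.

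The principal technical difficulty is deriving the twist phase $A^{\alpha k^2 + 2\beta k(s+1)}$ correctly, in particular the mixed linear term $2\beta k(s+1)$ which couples the cabling index $k$ to the probe color $s$. The cleanest route is to induct on $\beta$, peeling off one meridional strand at a time via the recursion $z_1 e_n = e_{n-1} + e_{n+1}$: at each step the strand being peeled contributes a Dehn-twist phase that depends on the current color of the longitudinal copy sitting parallel to $K_2$, and over $\beta$ iterations this accumulates to the quadratic exponent $\alpha k^2 = p\beta k^2$ plus the linear correction $2\beta k(s+1)$. The bookkeeping of these nested framing changes, carried out in the presence of the inner core $K_2$, is where the technical heart of the argument lies.
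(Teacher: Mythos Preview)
The paper does not actually prove this theorem: after stating Theorems 1 and 2 it says only that ``in the author's thesis a proof of the above formulas is given'' and cites \cite{QL}. So there is no proof in the paper to compare your proposal against, and what matters is whether your outline stands on its own.

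It does not. The central gap is the step where you ``invoke Morton's cabling formula'' and then ``multiply in the commutative algebra $S(V)$'' by $e_s$. Morton's formula is exactly the $s=0$ case of Theorem~2: it rewrites $\langle e_N\rangle_{K_1}$ as $\sum_k A^{pqk^2+2qk}e_{pk}$ in $S(D\times S^1)$, and that rewriting is achieved by isotopies which sweep across the core. Once the core carries a nontrivial colour $e_s$, those isotopies are no longer available in the complement of $K_2$; concretely, for $q\neq 0$ the link $(K_1,K_2)$ has linking number $q$ in $S^3$, so $K_1$ cannot be slid to a side-by-side parallel of $K_2$ and the product interpretation fails. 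A one–line check already shows this: for $p=q=1$, $N=1$, $s=1$ your ``Morton then multiply by $e_s$'' recipe gives $-A^{3}(e_0+e_2)$, whereas Theorem~2 gives $-A^{-3}e_0-A^{5}e_2$. The discrepancy is exactly the $s$–dependent phase $A^{2qk s}$, which is the whole content of the $s>0$ statement.

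Your write-up also slips into circularity: in the third paragraph you say Morton's formula produces terms $A^{\alpha k^2+2\beta k(s+1)}e_{s+pk}$ — but that expression already has the $s$–coupling built in, i.e.\ it \emph{is} Theorem~2 for the $(p,\beta)$ cable — and then you propose to multiply by $e_s$ a second time. Either the phase and the colour $e_{s+pk}$ are unexplained inputs, or the subsequent fusion $e_s\cdot e_{s+pk}$ is spurious; in neither reading is the argument complete. The induction on $\beta$ sketched at the end might be made to work, but as written it is only a slogan: the recursion $z_1e_n=e_{n-1}+e_{n+1}$ manipulates longitudinal strands, and you have not explained how peeling a \emph{meridional} strand off $K_1$ interacts with the colour on $K_2$ to produce the linear term $2\beta k(s+1)$. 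That interaction is the crux, and it needs a genuine calculation (in the thickened--torus skein module, or by tracking crossings with $K_2$ one at a time), not an appeal to the $s=0$ formula.
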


It seems to be complicated. But note there are at most two nonzero terms in this summation. Since for each value of $k$, the
summand is nonzero only if $l-s-pk=0$ or $l+s+2+pk=0$.

Substitute the expression for $g$'s to the expansion of $T_\sigma^N(p,q;s)$, we have
\begin{theorem}For framing $\sigma=pq$,
\[T_\sigma^N(p,q;s)=(-1)^{qN}\sum_{ \substack {k=-N\\ k+N\ even}}^N A^{pqk^2+2qk(s+1)} e_{pk+s}\]
\end{theorem}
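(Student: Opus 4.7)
The plan is to derive Theorem 2 as a direct computational consequence of Theorem 1 by substituting the explicit formula for the coefficients $g_\sigma^l(p,q;s)$ into the expansion $T_\sigma^N(p,q;s)=\sum_{l\ge 0} g_\sigma^l(p,q;s)e_l$ and then collapsing the sum. Since there are no analytic subtleties here, the entire argument is bookkeeping, and the real task is to identify which relations allow us to combine the two Kronecker-delta contributions into a single clean term.

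Concretely, I would first interchange the order of summation, pulling the outer $k$-sum to the front. For each fixed $k$, the factor $\{\delta(l-s-pk)-\delta(l+s+2+pk)\}$ picks out at most one nonnegative value of $l$: either $l = s+pk$ (contributing with $+$ sign) when $s+pk \ge 0$, or $l = -s-2-pk$ (contributing with $-$ sign) when $-s-2-pk \ge 0$. A short check shows these two ranges cannot both hold simultaneously, and the only $k$ for which neither holds is the one with $pk = -s-1$; but in that case the relation $e_n = -e_{-n-2}$ applied with $n=-1$ forces $e_{-1}=0$, so formally writing $e_{s+pk}$ still gives the correct (vanishing) contribution. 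In the second case, I would invoke $e_{-s-2-pk} = -e_{s+pk}$ to cancel the minus sign from the delta bracket, so that both cases give the uniform expression $A^{(\cdot)}\,e_{s+pk}$.

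Next I would simplify the exponent of $A$. A direct expansion yields
\[
\epsilon(-s^2 - 2s + l^2 + 2l) \;=\; \epsilon p^2 k^2 + 2\epsilon p k (s+1)
\]
for both choices $l = s+pk$ and $l = -s-2-pk$ (this is the key observation that makes the two contributions merge: the $\epsilon$-dependent exponent depends only on $k$, not on which case we are in). Adding this to the prefactor $\alpha k^2 + 2\beta k(s+1)$ and using $\alpha = p\beta$ together with $q = \epsilon p + \beta$, the total exponent becomes
\[
(p\beta + \epsilon p^2)k^2 + 2(\beta + \epsilon p)k(s+1) \;=\; pqk^2 + 2qk(s+1),
\]
which is exactly the exponent claimed in the theorem.

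The only step where one could slip is the sign accounting in the second delta contribution; once one notes that the minus sign from $-\delta(l+s+2+pk)$ is exactly absorbed by the minus sign from $e_{-s-2-pk} = -e_{s+pk}$, the rest is algebraic simplification. So I do not expect a substantive obstacle — the "content" of Theorem 2 is already entirely contained in Theorem 1, and what remains is to verify that the formula is in the compact form stated.
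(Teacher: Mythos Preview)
Your proposal is correct and follows exactly the route the paper indicates: the paper simply says ``Substitute the expression for $g$'s to the expansion of $T_\sigma^N(p,q;s)$'' before stating Theorem~2, and your write-up fills in precisely that substitution, including the key identity $e_{-s-2-pk}=-e_{s+pk}$ and the exponent simplification via $\alpha=p\beta$, $q=\epsilon p+\beta$. There is nothing more in the paper's own argument.
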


The case $s=0$ is an old result back to 90's. For cabling such that $p,q$ not necessarily coprime, a formula is available in \cite{VV}. In the author's thesis\cite{QL} a proof of the above formulas is given.

\section{Main theorem}

From the result mentioned in the previous section , we have
\[T_\sigma^N (p,q;0)=(-1)^{qN}\sum_{ \substack {k=-N\\ k+N\ even}}^N  A^{pqk^2+2qk} e_{pk}\]
where N is the color of the $(p,q)$-cable ($N\ge 0$), and the framing $\sigma$ of the cable is $pq$.

Let $K'$ be the $(p,q)$-cabling of an arbitrary knot $K$. Then
\[ J_N^{\sigma}(A,K')=(-1)^{qN}\sum_{ \substack {k=-N\\ k+N\ even}}^N  A^{pqk^2+2qk} J_{pk}(A,K)\tag {*}\] Here
the formula is for $N\ge 0$, when $N$ is negative, we shall use a symmetric property of $e_N$: \[e_N=-e_{-N-2}\]
So that
\[J_N(A,K')=-J_{-N-2}(A,K')\]

Let $T$ be the $(p_2,q_2)$ torus knots, $K$ is the $(p_1,q_1)$-cabling of $T$, then
\begin{align}
J_N^{\sigma} (A,K)
    &=(-1)^{q_1N}\sum_{ \substack {k=-N\\ k+N\ even}}^N A^{p_1q_1{k}^2+2q_1k} J_{p_1k}(A,T)\notag\\
    &=(-1)^{q_1N}\sum_{ \substack {k=-N\\ k+N\ even}}^N A^{p_1q_1{k}^2+2q_1k} \left[(-1)^{p_1k}A^{p_1k(p_1k+2)}\right]^{-\sigma'}
    J_{p_1k}^{\sigma'}(A,T)\notag\\
    &\qquad where\ \sigma'=p_2q_2\notag
    \end{align}

Applying the cable expansion formula (*) once again, we have:

\begin{proposition}
\begin{align}
J_N^\sigma (A,K)=&(-1)^{\alpha N}\sum_{ \substack{-N \le k\le N\\k+N\ even}} A^{\beta p_1k(p_1k+2)}sign(p_1k+1)\notag\\
&\cdot\sum_{ \substack{1-|p_1k+1| \le k'\le |p_1k+1|-1\\k'+p_1k\ even}}A^{\gamma p_2k'(p_2k'+2)}[p_2k'+1]\notag
\end{align}
    where\ $\alpha=q_1-p_1p_2q_2 +q_2p_1+p_2p_1$, $\beta =\frac{q_1}{p_1}-p_2q_2$, $\gamma=\frac{q_2}{p_2}$, $[m]=\frac{A^{2m}-A^{-2m}}{A^2-A^{-2}}$.
\end{proposition}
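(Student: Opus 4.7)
The statement is obtained by taking the expression already derived in the paragraph preceding the proposition,
\[J_N^\sigma(A,K) = (-1)^{q_1N}\!\!\sum_{\substack{-N\le k\le N\\ k+N\text{ even}}} \!\!A^{p_1q_1k^2+2q_1k}\bigl[(-1)^{p_1k}A^{p_1k(p_1k+2)}\bigr]^{-\sigma'}J_{p_1k}^{\sigma'}(A,T),\]
expanding $J_{p_1k}^{\sigma'}(A,T)$ by a second application of $(*)$, and then collecting all exponents and signs. The only wrinkle is that $(*)$ is stated for nonnegative colors while $p_1k$ may be negative; that is the step that will produce the $\operatorname{sign}(p_1k+1)$ and the symmetric range $|p_1k+1|-1$ appearing in the proposition.

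I will handle the sign of $p_1k$ uniformly using the skein identity $e_n=-e_{-n-2}$, which gives
\[J_{p_1k}^{\sigma'}(A,T) = \operatorname{sign}(p_1k+1)\,J_{|p_1k+1|-1}^{\sigma'}(A,T)\]
for every integer $p_1k$ (the new color equals $p_1k$ when $p_1k\ge 0$ and $-p_1k-2$ when $p_1k\le -2$, both nonnegative, while the degenerate case $p_1k=-1$ is killed by $\operatorname{sign}(0)=0$). Now $T$ is itself the $(p_2,q_2)$-cable of the unknot with framing $\sigma'=p_2q_2$, so $(*)$ applies to the nonnegative color $|p_1k+1|-1$ and yields
\[J_{|p_1k+1|-1}^{\sigma'}(A,T) = (-1)^{q_2(|p_1k+1|-1)}\!\!\sum_{\substack{1-|p_1k+1|\le k'\le |p_1k+1|-1\\ k'+|p_1k+1|-1\text{ even}}}\!\! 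A^{p_2q_2(k')^2+2q_2k'}\,(-1)^{p_2k'}[p_2k'+1],\]
where I used $J_{p_2k'}(A,U)=\langle e_{p_2k'}\rangle_U=(-1)^{p_2k'}[p_2k'+1]$. Since $|m|\equiv m\pmod 2$, the inner parity condition is equivalent to $k'+p_1k$ even, matching the statement.

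The remainder is pure bookkeeping. For the $A$-exponents, the outer contribution $p_1q_1k^2+2q_1k-p_2q_2\,p_1k(p_1k+2)$ simplifies to $\beta p_1k(p_1k+2)$ with $\beta=q_1/p_1-p_2q_2$, while the inner contribution $p_2q_2(k')^2+2q_2k'$ simplifies to $\gamma p_2k'(p_2k'+2)$ with $\gamma=q_2/p_2$. For the signs I multiply $(-1)^{q_1N}$, the framing-change factor $(-1)^{p_2q_2(|p_1k+1|-1)}=(-1)^{p_1p_2q_2 k}$, the second cable-expansion factor $(-1)^{q_2(|p_1k+1|-1)}=(-1)^{q_2p_1k}$, and the unknot factor $(-1)^{p_2k'}$. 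The outer parity forces $k\equiv N\pmod 2$ and the inner parity then forces $k'\equiv p_1k\equiv p_1N\pmod 2$, so every $k$- and $k'$-dependent exponent reduces modulo $2$ to a multiple of $N$, and the product collapses to $(-1)^{(q_1+p_1p_2q_2+p_1q_2+p_1p_2)N}=(-1)^{\alpha N}$ using $-p_1p_2q_2\equiv p_1p_2q_2\pmod 2$. The main obstacle is exactly this sign tracking across one framing change, the $e_n=-e_{-n-2}$ symmetry, and two applications of $(*)$, together with recognizing $|p_1k+1|$ and $\operatorname{sign}(p_1k+1)$ as the uniform encoding of the $p_1k\ge 0$ and $p_1k\le -2$ branches.
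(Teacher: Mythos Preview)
Your proposal is correct and follows exactly the approach the paper indicates: start from the displayed identity just before the proposition, use $e_n=-e_{-n-2}$ to reduce $J_{p_1k}^{\sigma'}(A,T)$ to a nonnegative color (producing the $\operatorname{sign}(p_1k+1)$ and the range $|p_1k+1|-1$), apply $(*)$ a second time to $T$ viewed as the $(p_2,q_2)$-cable of the unknot, insert $\langle e_{p_2k'}\rangle_U=(-1)^{p_2k'}[p_2k'+1]$, and reduce all signs via the parity constraints $k\equiv N$ and $k'\equiv p_1k\pmod 2$. The paper merely says ``Applying the cable expansion formula $(*)$ once again'' and states the result; you have correctly supplied the bookkeeping it omits.
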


We need to evaluate $\frac{J_N^\sigma (A,K)}{[N+1]}$ at $A=\exp\left(\frac{\pi i}{2(N+1)}\right)$, and then consider the
asymptotic behavior when $N\to\infty$. The main result is the following:

\begin{theorem} Under the condition that $\beta \gamma >0$, there is a sequence $\alpha_1 < \alpha_2 <...$ and $\alpha_n
\to \infty$, such that for any integer $m>0$,
\begin{align}
d_N\cdot\left(\frac{J_N^\sigma (A,K)}{[N+1]}\right)_{A=\exp\left(\frac{\pi i}{2(N+1)}\right)}&=\sum_{n=1}^m \frac{C_{n,N}}{(N+1)^{\alpha _n}}+o(\frac1{(N+1)^{\alpha_m}})\notag\\
&\qquad\qquad\qquad N\to \infty \notag\end{align}  The coefficient $c_{n,N}$ depend on $N$ periodically, and all of them have a common
period. And $d_N\to 1$. Furthermore, at least one of the coefficients is nonzero.
\end{theorem}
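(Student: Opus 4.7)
The plan is to extract the asymptotic expansion directly from the double sum of Proposition 1 evaluated at $A = \omega := \exp(\pi i/(2(N+1)))$. Since $[N+1]$ vanishes at $A=\omega$, I interpret $J_N^\sigma/[N+1]$ as the value of a rational function via L'H\^{o}pital: writing $A = \omega e^{i\eta}$, one has $[N+1] \approx -2(N+1)\eta/\sin(\pi/(N+1))$, so the evaluation reduces to extracting the linear coefficient in $\eta$ of $J_N^\sigma$ at $\eta = 0$. The prefactor $d_N$ absorbs the trigonometric correction $\sin(\pi/(N+1))/(\pi/(N+1))$, which tends to $1$.

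Next, I put each summand of the inner sum into pure Gaussian form. Expanding $[p_2 k'+1] = (A^{2p_2 k'+2} - A^{-2p_2 k'-2})/(A^2-A^{-2})$ and absorbing it into $A^{\gamma p_2 k'(p_2 k'+2)}$ turns the inner summand into $A^{p_2 q_2 k'^2 + 2(q_2\pm p_2)k' \pm 2}/(A^2-A^{-2})$. At $A=\omega$ these are pure phases $\exp(i\pi Q_\pm(k')/(2(N+1)))$, so the inner sum is a truncated quadratic Gauss sum in $k'$ of length $2|p_1 k+1|-1$. I would then apply Poisson summation---equivalently the Landsberg--Schaar reciprocity for generalized Gauss sums, as Kashaev does for torus knots---to convert the inner sum into a dual sum over integers $m$ modulo $p_2 q_2$, with phases $\exp(-i\pi(N+1)m^2/(p_2 q_2))$, plus explicit boundary contributions from the truncation endpoints.

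Substituting the dual form of the inner sum back into the outer sum over $k$ yields a Gauss sum in $k$ with quadratic phase governed by $\beta p_1^2$, modulated by the $m$-indexed inner contributions. A second Poisson transformation converts this into a dual sum over $m'$. The hypothesis $\beta\gamma>0$ is essential: the combined quadratic form in the pair of dual variables $(m,m')$ is then definite, placing all stationary phases at finite interior points and ensuring the expansion consists of genuine real-power contributions. The exponents $\alpha_n$ are the successive orders at which stationary and endpoint contributions enter the $1/(N+1)$ expansion, and the periodicity of $C_{n,N}$ in $N$ reflects the arithmetic residues of $p_2 q_2$ and $\beta p_1^2$ modulo their denominators, yielding a common period.

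The main obstacle is the $k$-dependence of the inner truncation range $|k'|\le|p_1 k+1|-1$: the boundary terms from the first Poisson transform depend on $k$ piecewise-linearly through $|p_1 k+1|$ and must be propagated through the second Poisson transform together with its own Gauss-sum structure, which forces careful bookkeeping. To show that at least one $C_{n,N}$ is nonzero, I would isolate the leading stationary contribution and compute its amplitude directly; the condition $\beta\gamma>0$ prevents destructive cancellation between conjugate saddles and guarantees the leading coefficient is nonzero.
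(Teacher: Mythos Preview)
Your approach is genuinely different from the paper's. The paper does not manipulate the discrete sums directly. Instead it replaces each Gaussian phase $\exp(h'w^2)$ by a contour integral $\frac{1}{\sqrt{\pi h'}}\int_{C_\phi}\exp(-z^2/h'-2zw)\,dz$, sums the resulting geometric series in $k$ and $k'$ \emph{inside} the integral to obtain ratios of hyperbolic sines (Proposition~3), differentiates in $h$ (your L'H\^{o}pital step is equation~(5.2)), and then shifts both contours to the unique critical point of the phase $\theta(z_1,z_2)$. The residue theorem splits $I_N$ into four pieces (Proposition~5), each of which is handled by a standard one- or two-variable Laplace lemma. The nonvanishing coefficient is the leading term of the shifted double integral $I_N'$, computed explicitly.

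The integral representation buys exactly what your sketch is missing: it dissolves the nested truncation. After summing the geometric series, the $k$-dependent inner range $|k'|\le|p_1k+1|-1$ becomes the analytic factor $\sinh(2p(m+1)z)/\sinh(2pz)$ under an integral sign, and the subsequent contour shift treats its poles and saddle uniformly. In your route, Landsberg--Schaar reciprocity applies to \emph{complete} periods; for a truncated Gauss sum the Poisson boundary terms depend on the endpoint $|p_1k+1|$, and feeding those back into the outer sum over $k$ produces new families of incomplete Gauss sums with shifted quadratic coefficients. ``Careful bookkeeping'' does not explain why those secondary contributions again assemble into an expansion with \emph{periodic} coefficients sharing a \emph{common} period, which is precisely the structural claim of the theorem. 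This is the genuine gap. A smaller point: Kashaev--Tirkkonen's argument for torus knots already uses the integral-representation and contour-shift method that this paper extends, not discrete reciprocity, so the appeal to ``as Kashaev does'' does not support your strategy. Finally, your nonvanishing argument (``$\beta\gamma>0$ prevents destructive cancellation between conjugate saddles'') is too vague; in the paper the nonzero coefficient is pinned down by an explicit Gaussian moment computation (Lemma~4), and an analogous explicit identification would be needed in your framework.
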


\begin{corollary} If $\beta \gamma >0$, then \[\lim_{N\to \infty} \frac{\log\left|\left(\frac{J_N^\sigma
(A,K)}{[N+1]}\right)_{A=\exp\left(\frac{\pi i}{2(N+1)}\right)}\right|}N=0\] That is, volume conjecture is true for certain
cable of the torus knots.
\end{corollary}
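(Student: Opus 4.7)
The plan is to derive the corollary directly from the asymptotic expansion established in the theorem. Write $q_N$ for the modulus of $J_N^\sigma(A,K)/[N+1]$ evaluated at $A=\exp(\pi i/(2(N+1)))$; the assertion is that $\log q_N/N\to 0$. Since $\log((N+1)^c)/N\to 0$ for any fixed real $c$ and $\log(1+o(1))/N\to 0$, it suffices to sandwich $q_N$ between positive multiples of (possibly different) negative powers of $N+1$. A polynomial upper and lower bound on $q_N$ is exactly what the theorem supplies.

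The upper bound is immediate: take $m=1$ in the theorem, so that $d_N q_N = C_{1,N}(N+1)^{-\alpha_1} + o((N+1)^{-\alpha_1})$. Because $d_N\to 1$ and the sequence $\{C_{1,N}\}$ is periodic in $N$, hence bounded, we obtain $q_N\le C(N+1)^{-\alpha_1}$ for $N$ large, whence $\limsup_N\log q_N/N\le 0$.

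For the matching lower bound, let $T$ be the common period of the coefficients, and for each residue class $r$ modulo $T$ let $n(r)$ denote, if it exists, the smallest index with $C_{n(r),N}\neq 0$ for $N\equiv r\pmod T$. Applying the theorem with $m=n(r)+1$ yields $q_N\ge c_r(N+1)^{-\alpha_{n(r)}}$ on that arithmetic progression, so $\log q_N/N\to 0$ along it. Piecing together the finitely many residue classes on which such an $n(r)$ exists gives the required polynomial lower bound, and hence $\liminf\log q_N/N\ge 0$ on their union.

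The main obstacle, and the step I expect to carry the real content, is verifying that $n(r)$ exists for \emph{every} residue class $r$, so that no subsequence of $N$ escapes the argument. The plan is to return to the explicit double sum in the proposition, isolate its leading-order contribution at $A=\exp(\pi i/(2(N+1)))$ through a stationary-phase or Poisson-summation analysis on the inner Gaussian-type exponential sum, and invoke the hypothesis $\beta\gamma>0$ to rule out identical cancellation of that contribution along any arithmetic progression of $N$. Once nonvanishing on every residue class is secured, combining the two polynomial bounds finishes $\lim\log q_N/N=0$, which is the volume conjecture in the claimed form.
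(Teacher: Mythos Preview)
Your argument is essentially the paper's: split $\{\kappa_N\}$ into residue classes modulo the common period $T$, on each class the coefficients become constants, take the first nonzero one $C_{m_0}$, and read off $\kappa_N(N+1)^{\alpha_{m_0}}\to C_{m_0}$, which gives both the upper and lower polynomial bounds at once. The paper does exactly this in a few lines.

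Where you diverge is in flagging the existence of $n(r)$ on \emph{every} residue class as ``the main obstacle'' and planning a fresh stationary-phase or Poisson-summation analysis of the double sum to secure it. That extra work is not needed: the nonvanishing is already part of Theorem~3, and in the proof of that theorem the nonzero coefficient is exhibited explicitly. It is the leading integer-exponent term coming from the shifted double integral $I_N'$, which factors as $\exp[(N+1)\theta(w_1,w_2)]$ times an $N$-independent Gaussian integral shown to be nonzero. Since the exponential factor is a root of unity, this coefficient is nonzero for \emph{all} $N$, hence on every residue class. So once you accept Theorem~3 as stated (and proved), the corollary follows immediately without returning to the integral representation.
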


\begin{proof}Denote \[\left(\frac{J_N^\sigma (A,K)}{[N+1]}\right)_{A=\exp\left(\frac{\pi i}{2(N+1)}\right)}\] by $\kappa_N$. Since those $C$'s have a common period, say
$T$, by decomposing the sequence \{$\kappa_N$\} into $T$ subsequences, we can assume that those $C$'s are independent of $N$.
Pick the first nonzero $C$, say $C_{m_0}$. Then
\[d_N\cdot\kappa_N=\frac{C_{m_0}}{(N+1)^{\alpha_{m_0}}}+o(\frac1{(N+1)^{\alpha_{m_0}}})\]
that is \[\kappa_N (N+1)^{\alpha_{m_0}} \to C_{m_0}\] so \[\log|\kappa_N|+ \alpha_{m_0}\log(N+1)\to \log|C_{m_0}|\] and our
claim follows from it.
\end{proof}

In the next two sections we will prepare for the proof of the main theorem. And the proof is finished in section 7. The analytical treatment here follows that of Kashaev \cite{KT}. 

\section{Integral representation of the polynomial and its value at root of unity}

\subsection{Integral representation of the double sum}
Now let's first evaluate $\frac{J_N^\sigma (A,K)}{[N+1]}$ at $A=\exp\frac{h}2$, where $h$ is some complex number. From proposition 1 we have

 \begin{align} (-1)^{\alpha N}&\exp\left(\frac{\beta h+\gamma h}2\right)\sinh((N+1)h)\left(\frac{J_N^\sigma
(A,K)}{[N+1]}\right)_{A=\exp\frac{h}2}\tag{5.1}\\
&=\sum_{k}\sum_{k'}\exp\left(\frac{\beta(p_1k+1)^2 h}2\right)\exp\left(\frac{\gamma(p_2k'+1)^2
h}2\right)\notag\\
&\qquad\qquad\cdot sign(p_1k+1)\sinh((p_2k'+1)h)\notag\end{align}

Now we use an integral expression of $\exp(h'w^2)$,
\[\exp(h'w^2)=\frac 1{\sqrt{\pi h'}}\int_{C_\phi} \exp\left(-\frac{z^2}{h'}-2zw\right)\ dz\]
where $C_\phi$ is a line in the $z$-plane, $C_\phi=\lbrace z|argz=\phi\ or\ \phi+\pi \rbrace$, and $\phi$ satisfies
\[-\frac {\pi}2 +arg(h')< 2\phi < \frac {\pi}2 +arg(h')\]

\begin{proposition} Denote the right hand side of (5.1) by $S_N(h)$, we have

\begin{align} S_N(h)&=\frac 2{\pi h\sqrt{\beta\gamma}}\int_{C_{\phi_1}}\int_{C_{\phi_2}}\,dz_1 \exp\left(-\frac{2z_1^2}{\beta h}\right)\,dz_2
\exp\left(-\frac{2z_2^2}{\gamma h}\right)\notag\\
&\qquad\cdot\sum_{k}\sum_{k'}\exp\left(-2z_1(p_1k+1)\right)\exp\left(-2z_2(p_2k'+1)\right)\notag\\
&\qquad\cdot sign(p_1k+1)\sinh((p_2k'+1)h)\notag\end{align}
where $C_{\phi_j}$ is a line in the $z_j$-plane.
\end{proposition}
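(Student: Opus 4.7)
The strategy is to invoke the Gaussian integral identity
$$\exp(h'w^2) = \frac{1}{\sqrt{\pi h'}}\int_{C_\phi}\exp\left(-\frac{z^2}{h'} - 2zw\right)\,dz$$
twice — once for each quadratic-in-$k$ exponential appearing in $S_N(h)$ — and then to interchange the resulting contour integrals with the (finite) double sum over $k, k'$.

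Explicitly, I would first apply the identity to $\exp(\beta(p_1k+1)^2 h/2)$ with $h' = \beta h/2$ and $w = p_1k+1$, introducing an integration variable $z_1$ on a contour $C_{\phi_1}$ whose angle satisfies $-\pi/2 + \arg(\beta h/2) < 2\phi_1 < \pi/2 + \arg(\beta h/2)$. The exponent $-z_1^2/(\beta h/2) - 2z_1(p_1k+1)$ then equals $-2z_1^2/(\beta h) - 2z_1(p_1k+1)$, matching the integrand in the statement. I would repeat the procedure on $\exp(\gamma(p_2k'+1)^2 h/2)$ with $h' = \gamma h/2$ and $w = p_2k'+1$, producing a $z_2$-integral on $C_{\phi_2}$. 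The two prefactors multiply to
$$\frac{1}{\sqrt{\pi\beta h/2}}\cdot\frac{1}{\sqrt{\pi\gamma h/2}} = \frac{2}{\pi h\sqrt{\beta\gamma}},$$
which is exactly the constant in front of the claimed integral.

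The last step is to interchange the two contour integrals with the double sum. Since the sums in Proposition 1 are finite (with $|k|\le N$ and $|k'|\le|p_1k+1|-1$), the interchange is immediate from Fubini; the only analytic point is absolute integrability of $\exp(-z_j^2/h'_j)$ along a straight line through the origin, which is automatic from the angle conditions on $\phi_j$. The only bookkeeping is a consistent choice of $\phi_1, \phi_2$ — easy under the main-theorem hypothesis $\beta\gamma > 0$, since then $\arg(\beta h/2) = \arg(\gamma h/2)$ and one can even take $\phi_1 = \phi_2$. In short, Proposition 2 is essentially a tautological rewriting of $S_N(h)$ via the Gaussian integral representation; the main conceptual obstacle, if there is one, lies not in this step but in subsequently exploiting the integral form to analyze the asymptotics of the colored Jones polynomial in the sections that follow.
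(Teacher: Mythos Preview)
Your proposal is correct and follows exactly the paper's approach: apply the Gaussian integral identity $\exp(h'w^2)=\frac{1}{\sqrt{\pi h'}}\int_{C_\phi}\exp(-z^2/h'-2zw)\,dz$ once with $h'=\beta h/2$, $w=p_1k+1$ and once with $h'=\gamma h/2$, $w=p_2k'+1$, then swap the integrals with the finite sums. Your computation of the prefactor and your remark on the angle condition (and the possibility of choosing $\phi_1=\phi_2$ when $\beta\gamma>0$) are precisely what the paper does immediately before and after stating the proposition.
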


From now on, we shall assume $\beta\gamma >0$, and choose $\phi_1=\phi_2$. The desired asymptotic expansion will be obtained under this condition.

To evaluate a sum of the following form:

\[\int_{C_{\phi}}\,dz \sum_{ \substack{-m \le k\le m\\k+m\ even}} \exp\left(-\frac{z^2}{h'}\right)
\exp\left(-2z(pk+1)\right)\sinh(\delta(pk+1))\]

we calculate the sum over $k$ first,--- this is essentially a geometric series, we get:

\[\int_{C_{\phi}}\,dz \exp\left(-\frac{z^2}{h'}\right)\sinh(-2z+\delta)\frac {\sinh(p(m+1)(2z-\delta))}{\sinh(p(2z-\delta))}\]

Then we shift the path of integral to $C_{\phi}+\delta/2$, and apply change of variables $z\mapsto z+\delta/2$, we get:

\[\int_{C_{\phi}}\,dz \exp\left(-\frac{(z+\frac {\delta}2)^2}{h'}\right)\sinh(-2z)\frac {\sinh(2p(m+1)z)}{\sinh(2pz))}\]

Next we expand $(z+\frac {\delta}2)^2$, move the constant term out of the integral, and move down the linear term of $z$ from the exponential,--- this step is possible since \[\sinh(-2z)\frac {\sinh(2p(m+1)z)}{\sinh(2pz))}\] is an odd function of $z$, finally we have:

\[\exp\left(-\frac{\delta ^2}{4h'}\right)\int_{C_{\phi}}\,dz
\exp\left(-\frac{z^2}{h'}\right)\frac{\sinh(\frac{z\delta}{h'})\sinh(2z)}{\sinh(p(2z))}\sinh(p(m+1)(2z))\]

Returning to $S_N$, follow the above process of evaluation twice, we have the following integral representation of $S_N$:

\begin{proposition}

\begin{align}
S_N(h)&=\frac 2{\pi h\sqrt{\beta\gamma}}\frac 1{p_2} \exp\left(-\frac
h{2\gamma}\right)\int_{C_{\phi_1}}\int_{C_{\phi_2}}\,dz_1\,dz_2\psi_1(z_1)\psi_2(z_2)\notag\\
&\qquad\qquad\qquad\cdot \exp\left(-\frac{2z_2^2}{p_2^2\gamma h}-\frac{2(z_1-z_2)^2}{\beta h}+2p_1(N+1)z_1\right)\notag\end{align}

where \[\psi_1(z_1)=\frac{\sinh(2z_1)}{\sinh(2p_1z_1)}\]
\[\psi_2(z_2)=\frac{\sinh(\frac{2z_2}{p_2\gamma})\sinh(\frac {2z_2}{p_2})}{\sinh(2z_2)}\]
\end{proposition}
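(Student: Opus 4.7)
The plan is to apply the template evaluation displayed in the paragraphs preceding Proposition 3 twice in succession---first to the $(k',z_2)$ pair, then to the $(k,z_1)$ pair---and finally to perform a rescaling $z_2\mapsto z_2/p_2$ that completes the square in the exponent and produces the $(z_1-z_2)^2$ term.

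I would begin by fixing $z_1$ and $k$ and applying the template to the inner sum/integral, reading it with the dictionary $p\leftrightarrow p_2$, $\delta\leftrightarrow h$, $h'\leftrightarrow \gamma h/2$, $z\leftrightarrow z_2$, and $m\leftrightarrow |p_1k+1|-1$ (matching the $k'$-range $1-|p_1k+1|\le k'\le|p_1k+1|-1$ from Proposition 1). The template prefactor $\exp(-\delta^2/(4h'))$ becomes $\exp(-h/(2\gamma))$---exactly the scalar in Proposition 3---and the $z_2$-integrand acquires $\sinh(2z_2/\gamma)\sinh(2z_2)/\sinh(2p_2z_2)$ together with a residual $\sinh(2p_2|p_1k+1|z_2)$. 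The loose $\mathrm{sign}(p_1k+1)$ factor gets absorbed into that residual by oddness of $\sinh$, leaving $\sinh(2p_2z_2(p_1k+1))$. Then I would apply the template a second time, now to the $k$-sum against $\int dz_1\exp(-2z_1^2/(\beta h))$, with dictionary $p\leftrightarrow p_1$, $\delta\leftrightarrow 2p_2z_2$, $h'\leftrightarrow \beta h/2$, $z\leftrightarrow z_1$, $m\leftrightarrow N$. This produces a further Gaussian-like factor $\exp(-2p_2^2z_2^2/(\beta h))$ and replaces the $k$-sum by $\sinh(4p_2z_1z_2/(\beta h))\sinh(2z_1)/\sinh(2p_1z_1)\cdot\sinh(2p_1(N+1)z_1)$.

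The hard step is to show that the two free $\sinh$ factors $\sinh(4p_2z_1z_2/(\beta h))$ and $\sinh(2p_1(N+1)z_1)$ can be combined, under the double contour integral, into the single exponential $\exp(4p_2z_1z_2/(\beta h)+2p_1(N+1)z_1)$. My justification would be a two-variable version of the ``moving down'' trick already used inside the template: writing $a=4p_2z_1z_2/(\beta h)$, $b=2p_1(N+1)z_1$ and expanding
\[\exp(a+b)=\sinh(a)\sinh(b)+\sinh(a)\cosh(b)+\cosh(a)\sinh(b)+\cosh(a)\cosh(b),\]
one checks against the parities of $\sinh(2z_1)/\sinh(2p_1z_1)$ (even in $z_1$), $\sinh(2z_2/\gamma)\sinh(2z_2)/\sinh(2p_2z_2)$ (odd in $z_2$), and the Gaussian (even in both) that each of the three ``$\sinh\cosh$''-type terms is odd in at least one of $z_1,z_2$ and therefore integrates to zero on the reflection-symmetric lines $C_{\phi_1},C_{\phi_2}$. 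Juggling the parities in two variables simultaneously and confirming that the contours are really invariant under $z_j\mapsto-z_j$ is where I expect the main bookkeeping difficulty to lie.

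Finally I would change variable $z_2\mapsto z_2/p_2$, producing the Jacobian $1/p_2$ in the prefactor and sending the $z_2$-factor to $\psi_2(z_2)$. The quadratic form in the exponent then consolidates as
\[-\frac{2z_1^2}{\beta h}+\frac{4z_1z_2}{\beta h}-\frac{2z_2^2}{\beta h}-\frac{2z_2^2}{p_2^2\gamma h}+2p_1(N+1)z_1=-\frac{2(z_1-z_2)^2}{\beta h}-\frac{2z_2^2}{p_2^2\gamma h}+2p_1(N+1)z_1,\]
which completes the square and yields exactly the formula of Proposition 3.
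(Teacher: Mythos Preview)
Your proposal is correct and follows exactly the route the paper indicates with the terse instruction ``follow the above process of evaluation twice.'' You have simply made explicit the two steps the paper leaves implicit: the two-variable parity argument that replaces $\sinh(4p_2z_1z_2/(\beta h))\sinh(2p_1(N+1)z_1)$ by the single exponential, and the rescaling $z_2\mapsto z_2/p_2$ that completes the square and produces $\psi_2$ and the $1/p_2$ prefactor.
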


Note the double integral is absolute convergent since we assume $\beta\gamma >0$ and choose $\phi_1=\phi_2$.

\subsection{Evaluation at root of unity}
From (5.1), we see that $S_N\left(\frac{\pi i}{N+1}\right)=0$, since the left hand side of (5.1) contains a factor $\sinh((N+1)h)$.
If we differentiate (5.1) w.r.t. $h$ and then set $h=\frac{\pi i}{N+1}$, we have
\begin{align}
&\qquad (-1)^{\alpha N}\exp\left(\frac {\beta+\gamma}{2(N+1)}\pi i\right)\cdot(-(N+1))\left(\frac{J_N^\sigma
(A,K)}{[N+1]}\right)_{A=\exp\left(\frac{\pi i}{2(N+1)}\right)}\tag{5.2}\\
&=\left(\frac d{dh} S_N(h)\right)_{h=\frac{\pi i}{N+1}}\notag\end{align}

So for our purpose, we need to consider $\left(\frac d{dh} S_N(h)\right)_{h=\frac{\pi i}{N+1}}$.
Use the integral representation from proposition 3, we have:

\begin{proposition}
\begin{align}
&\qquad\left(\frac d{dh} S_N(h)\right)_{h=\frac{\pi i}{N+1}}\notag\\
&=\frac{2(N+1)^3}{p_2\sqrt{\beta\gamma}\pi ^4 i^3}\exp\left(-\frac {\pi i}{2\gamma
(N+1)}\right)\int_{C_{\phi_1}}\int_{C_{\phi_2}}\,dz_1\,dz_2\psi_1(z_1)\psi_2(z_2)\notag\\
&\qquad\qquad\qquad\cdot \exp[(N+1)\theta (z_1,z_2)]\cdot \left(\frac{2z_2^2}{p_2^2\gamma
}+\frac{2(z_1-z_2)^2}{\beta}\right)\notag\end{align}

where \[\theta (z_1,z_2)=-\frac{2z_2^2}{p_2^2\gamma \pi i}-\frac{2(z_1-z_2)^2}{\beta \pi i}+2p_1z_1\]
\[\psi_1(z_1)=\frac{\sinh(2z_1)}{\sinh(2p_1z_1)}\]
\[\psi_2(z_2)=\frac{\sinh(\frac{2z_2}{p_2\gamma})\sinh(\frac {2z_2}{p_2})}{\sinh(2z_2)}\]
\end{proposition}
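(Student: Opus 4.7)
The plan is to differentiate the integral representation from Proposition~3 term by term, exploit the vanishing of $S_N$ at the target point to kill the unwanted terms, and then plug in $h=\frac{\pi i}{N+1}$.

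Write $S_N(h)=F(h)G(h)$ where
\[F(h)=\frac{2}{\pi h p_2\sqrt{\beta\gamma}}\exp\!\left(-\frac{h}{2\gamma}\right),\qquad G(h)=\int_{C_{\phi_1}}\!\int_{C_{\phi_2}}\psi_1(z_1)\psi_2(z_2)e^{\Phi(z_1,z_2;h)}\,dz_1\,dz_2,\]
with $\Phi(z_1,z_2;h)=-\frac{2z_2^2}{p_2^2\gamma h}-\frac{2(z_1-z_2)^2}{\beta h}+2p_1(N+1)z_1$. By the product rule, $S_N'(h)=F'(h)G(h)+F(h)G'(h)$. The first observation is that (5.1) gives $S_N\!\left(\frac{\pi i}{N+1}\right)=0$ because of the $\sinh((N+1)h)$ factor on the left-hand side, and since $F$ is obviously nonzero at $h=\frac{\pi i}{N+1}$, we conclude $G\!\left(\frac{\pi i}{N+1}\right)=0$. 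Therefore the entire $F'(h)G(h)$ contribution vanishes upon substitution and we are left with $F\!\left(\frac{\pi i}{N+1}\right)\cdot G'\!\left(\frac{\pi i}{N+1}\right)$.

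The second step is to differentiate $G$ under the integral sign. The absolute convergence of the double integral, noted at the end of subsection~5.1 under the hypothesis $\beta\gamma>0$ with $\phi_1=\phi_2$, together with uniform bounds on a small disk around $h=\frac{\pi i}{N+1}$ away from the real axis (so the denominators $\beta h$, $p_2^2\gamma h$ stay uniformly bounded away from zero), will let me invoke dominated convergence to justify differentiating inside the integral. Then
\[\partial_h\Phi=\frac{1}{h^2}\!\left(\frac{2z_2^2}{p_2^2\gamma}+\frac{2(z_1-z_2)^2}{\beta}\right),\]
and at $h=\frac{\pi i}{N+1}$ the prefactor becomes $\frac{1}{h^2}=-\frac{(N+1)^2}{\pi^2}$, while the exponent collapses to $(N+1)\theta(z_1,z_2)$ as defined in the statement.

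Finally I just multiply: $F\!\left(\frac{\pi i}{N+1}\right)=\frac{2(N+1)}{\pi^2 i\, p_2\sqrt{\beta\gamma}}\exp\!\left(-\frac{\pi i}{2\gamma(N+1)}\right)$, and combining with the $-\frac{(N+1)^2}{\pi^2}$ from $\partial_h\Phi$ produces the prefactor $\frac{-2(N+1)^3}{\pi^4 i\, p_2\sqrt{\beta\gamma}}=\frac{2(N+1)^3}{p_2\sqrt{\beta\gamma}\,\pi^4 i^3}$ stated in Proposition~4, with the correct quadratic insertion $\frac{2z_2^2}{p_2^2\gamma}+\frac{2(z_1-z_2)^2}{\beta}$ inside the integral. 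The main obstacle is the routine but slightly delicate verification that differentiation under the integral is legal on the contours $C_{\phi_j}$; once that is granted the rest is bookkeeping, and the whole argument hinges on the clean observation that $S_N$ already vanishes at the evaluation point, which eliminates what would otherwise be a messy boundary contribution from $F'(h)G(h)$.
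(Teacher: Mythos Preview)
Your proposal is correct and follows essentially the same approach the paper takes: the paper itself does not spell out a proof beyond ``Use the integral representation from proposition~3'', and immediately above the statement it records the same key observation you use, namely that $S_N\!\left(\frac{\pi i}{N+1}\right)=0$ via the $\sinh((N+1)h)$ factor in (5.1). Your argument fills in the natural details (the product-rule splitting, the vanishing of $G$ at the target point, differentiation under the integral, and the bookkeeping on the prefactor), and your numerical check $-\frac{1}{\pi^4 i}=\frac{1}{\pi^4 i^3}$ matches the form stated in the proposition.
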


Now we have chosen a value of $h$, consequently we shall choose the value of $\phi_1$ and $\phi_2$.
In case $\beta>0$ ---hence $\gamma>0$, we choose:

\[\phi_1=\phi_2=\pi/4\]

or any value strictly between $0$ and $\frac{\pi}2$. In case $\beta<0$, we choose:

\[\phi_1=\phi_2=-\pi/4\]

or any value strictly between $0$ and $-\frac{\pi}2$.

\section{Study of the asymptotic behavior of the double integral}\label{asymptotics}

 Denote the double integral in proposition 4 by $I_N$. We need to show that for any integer $M>0$, $I_N$ has an expansion

\[I_N=\sum_{m=0}^M \frac {C_{m,N}}{(N+1)^{m/2}}+o\left(\frac 1{(N+1)^{M/2}}\right)\tag{A}\]

where the coefficients $C_{m,N}$ depend on $N$, but is a periodic function of N. All of them have a common period $T$ which
depends on $p_1,q_1,p_2,q_2$ only. Furthermore at least one of the coefficients is nonzero.

$\theta(z_1,z_2)$ has a unique critical point
\[(w_1,w_2)=\left(\frac {p_1p_2^2\gamma+p_1\beta}2 \pi i,\ \frac{p_1p_2^2\gamma}2 \pi i\right)=\left(\frac{q_1\pi i}2,\ \frac{p_1p_2q_2\pi i}2 \right)\]

In the double integral $I_N$, we shall shift $C_{\phi_j}$ to $C_{\phi_j}+w_j$ ($j=1,2$). So we must consider the possible poles
of $\psi_j(z_j)$ in the strip region bounded by $C_{\phi_j}$ and $C_{\phi_j}+w_j$.

$\psi_1(z_1)$ has possible poles $\xi_m=\frac m{2p_1}\pi i$ within $C_{\phi_1}$ and $C_{\phi_1}+w_1$, here $m$ is an integer, contained in the open interval from 0 to $p_1q_1$.

$\psi_2(z_2)$ has possible poles $\eta_n=\frac n{2}\pi i$ within $C_{\phi_2}$ and $C_{\phi_2}+w_2$, here $n$ is an integer, contained in the open interval from 0 to $p_1p_2q_2$.

Let \[F_N(z_1,z_2)=\left(\frac{2z_2^2}{p_2^2\gamma
}+\frac{2(z_1-z_2)^2}{\beta}\right)\exp[(N+1)\theta (z_1,z_2)]\]

Then \begin{proposition}
\begin{align}
I_N&=\int_{C_{\phi_1}}\int_{C_{\phi_2}}\,dz_1\,dz_2 \psi_1(z_1)\psi_2(z_2)F_N(z_1,z_2)\notag\\
&=\int_{C_{\phi_1+w_1}}\int_{C_{\phi_2+w_2}}\,dz_1\,dz_2 \psi_1(z_1)\psi_2(z_2)F_N(z_1,z_2)\notag\\
& +\sum_m (2\pi i) Res(\psi_1,\xi_m)\int_{C_{\phi_2+w_2}}\,dz_2\psi_2(z_2)F_N(\xi_m,z_2)\notag\\
&+\sum_n (2\pi i) Res(\psi_2,\eta_n)\int_{C_{\phi_1+w_1}}\,dz_1\psi_1(z_1)F_N(z_1,\eta_n)\notag\\
& +\sum_{m,n}(2\pi i)^2 Res(\psi_1,\xi_m)Res(\psi_2,\eta_n)F_N(\xi_m,\eta_n)\notag\end{align}
\end{proposition}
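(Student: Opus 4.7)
The plan is to derive the decomposition by two successive applications of the one-variable residue theorem, first shifting the $z_1$-contour and then the $z_2$-contour, with Fubini used throughout to justify iterating the integrals. The key preliminary observation is that, for $\phi_1=\phi_2$ chosen as specified, one has $\operatorname{Re}\theta(z_1,z_2)\to -\infty$ quadratically along $C_{\phi_j}$ (the sign of $\sin 2\phi_j$ is opposite to that of $\beta$ and $\gamma$), while the meromorphic $\sinh$-ratios $\psi_1,\psi_2$ are bounded by $e^{O(|z_j|)}$ uniformly away from their poles. Hence the integrand decays like $\exp\bigl(-c(N+1)(|z_1|^2+|z_2|^2)\bigr)$ outside small disks around those poles, giving absolute integrability on $C_{\phi_1}\times C_{\phi_2}$ and on every translate used below.

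With $z_2\in C_{\phi_2}$ held fixed, I would shift the $z_1$-contour from $C_{\phi_1}$ to $C_{\phi_1}+w_1$ by closing with cap segments parallel to $w_1$ at distance $\pm R$ along $C_{\phi_1}$ and letting $R\to\infty$; the Gaussian estimate kills the cap contributions. The singularities of $\psi_1(z_1)F_N(z_1,z_2)$ inside the open strip are exactly the $\xi_m=m\pi i/(2p_1)$ with $0<m<p_1q_1$ (the factors $\psi_2(z_2)$ and $\exp[(N+1)\theta]$ are entire in $z_1$; the zeros of $\sinh(2z_1)$ kill some candidate poles, which then contribute $\operatorname{Res}(\psi_1,\xi_m)=0$ and are harmless to include). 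Cauchy's theorem yields
\begin{equation*}
\int_{C_{\phi_1}}\psi_1(z_1)F_N(z_1,z_2)\,dz_1
= \int_{C_{\phi_1}+w_1}\psi_1(z_1)F_N(z_1,z_2)\,dz_1
+2\pi i\sum_m \operatorname{Res}(\psi_1,\xi_m)\,F_N(\xi_m,z_2).
\end{equation*}
Integrating against $\psi_2(z_2)\,dz_2$ on $C_{\phi_2}$ produces two of the four terms on the right-hand side of the proposition.

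Next I would repeat the procedure for $z_2$ in each of those two terms, shifting $C_{\phi_2}$ to $C_{\phi_2}+w_2$. For fixed $z_1$ (either on $C_{\phi_1}+w_1$ or equal to $\xi_m$), the $z_2$-integrand is meromorphic in the strip with relevant poles at $\eta_n=n\pi i/2$, $0<n<p_1p_2q_2$. Applied to the double integral on $(C_{\phi_1}+w_1)\times C_{\phi_2}$, the residue theorem splits it into the fully shifted double integral plus the $\sum_n$ residue term of the proposition; applied to the $\sum_m$ residue integral it splits the latter into the $\sum_m$ residue integral over $C_{\phi_2}+w_2$ plus the double-residue sum. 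Adding the four resulting pieces is precisely the stated decomposition.

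The only technical point requiring care is the vanishing of the cap integrals as $R\to\infty$ at each shift; this reduces to uniform pointwise bounds on $\psi_1,\psi_2$ on strips with small disks around the poles removed, combined with the quadratic exponential decay of $\exp[(N+1)\theta]$. Since the $\sinh$-ratios are quasi-periodic along the imaginary direction and admit these uniform bounds, the estimate is routine, and the remainder of the argument is the bookkeeping of two successive residue computations.
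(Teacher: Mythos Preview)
Your argument is correct and is exactly the two-step contour shift via the residue theorem that the paper takes for granted (the paper states the proposition without proof, after listing the relevant poles of $\psi_1,\psi_2$ in the strips). One harmless slip: in your parenthetical you say $\sin 2\phi_j$ has sign \emph{opposite} to $\beta,\gamma$, but in fact the choice $\phi=\pm\pi/4$ makes $\sin 2\phi$ have the \emph{same} sign as $\beta,\gamma$, which is precisely what forces $\operatorname{Re}\theta\to-\infty$; your decay conclusion is right, only the stated sign relation is inverted.
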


Next we will consider the asymptotic expansion of each of the four terms and show that they don't cancel each other.

\subsection{The double summation}\label{the double summation}

 First consider the double summation. One can check that $\theta (\xi_m,\eta_n)=\frac s{t} \pi i$, where $s,t$ are integers,
depending on $p_1,q_1,p_2,q_2$ and $k,l$.\quad So $F_N(\xi_m,\eta_n)$ depends on N periodically, and there is a common period of them, which is independent of $N$.

So the double summation is a quantity depend on N periodically.

\subsection{The summations over n}\label{the two summations}

Now consider the integral
\[\int_{C_{\phi_1+w_1}}\,dz_1\psi_1(z_1)F_N(z_1,\eta_n)\]
$\theta (z_1,\eta_n)$ has a unique critical point \[\zeta=\frac{p_1\beta+n}{2}\pi i\] and
\[\theta (z_1,\eta_n)=-\frac 2{\beta\pi i} (z_1-\zeta)^2 + \theta (\zeta,\eta_n)\]
As before, we shift $C_{\phi_1+w_1}$ to $C_{\phi_1+\zeta}$. Note that $\zeta$ is not a pole of $\psi_1(z)$ since $p_1\beta+n$ is an integer.

The difference between the integrals over the two paths is a sum of residues of $\psi_1(z_1)F_N(z_1,\eta_n)$ at poles of $\psi_1(z)$. As before, one can check that these residues depend on $N$ periodically.

As for the integral over $C_{\phi_1+\zeta}$, we have:

\begin{align}
&\qquad \int_{C_{\phi_1+\zeta}}\,dz_1\psi_1(z_1)F_N(z_1,\eta_n)\notag\\
&=\exp[(N+1)\theta(\zeta,\eta_l)]\int_{C_{\phi_1}}\,dz_1\Psi(z_1+\zeta)\exp\left[-\frac {2(N+1)}{\beta\pi i}
z_1^2\right]\notag\end{align}

where \[\Psi(w)=\psi_1(w)\left(\frac{2\eta_l^2}{p_2^2\gamma
}+\frac{2(w-\eta_l)^2}{\beta}\right)\] is holomorphic along the line $C_{\phi_1}+\zeta$.

The asymptotic behavior of the last integral is given by the following lemma:

\begin{lemma} Let $\alpha$ be a complex number, $Re\alpha>0$, $g(x)$ is a smooth function, and $|g(x)|<Ce^{\lambda |x|}$
for some constant $C, \lambda$, then for any integer $M>0$,
\begin{align}
&\qquad \int_{-\infty}^{\infty} \exp[-(N+1)\alpha x^2] g(x)\, dx\notag\\
&=\sum_{m=0}^M \frac{g^{(2m)}(0)}{(2m)!}\cdot \frac 1{(N+1)^{m+1/2}}\int_{-\infty}^{\infty} \exp(-\alpha x^2) x^{2m}\,
dx+o\left(\frac 1{(N+1)^{M+1/2}}\right)\notag\\
&\qquad\qquad\qquad\qquad\qquad\qquad\qquad\qquad\qquad N\to \infty \notag\end{align}
\end{lemma}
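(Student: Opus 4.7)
The plan is to reduce the integral to a fixed Gaussian by the substitution $y=\sqrt{N+1}\,x$, which gives
\[\int_{-\infty}^{\infty} e^{-(N+1)\alpha x^{2}} g(x)\,dx = \frac{1}{\sqrt{N+1}}\int_{-\infty}^{\infty} e^{-\alpha y^{2}}\,g\!\left(\tfrac{y}{\sqrt{N+1}}\right)dy,\]
and then to extract the asymptotic contribution from a neighborhood of $y=0$, where $g$ may be Taylor expanded, while discarding the tails using the Gaussian decay. Matching the resulting expansion term by term with the series in the lemma will yield the claim.

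I would partition the real line into a bulk $|y|\le (N+1)^{1/4}$ and a tail $|y|>(N+1)^{1/4}$. On the tail, the hypothesis $|g(x)|\le Ce^{\lambda|x|}$ gives $|g(y/\sqrt{N+1})|\le C\exp(\lambda|y|/\sqrt{N+1})$, which for $N$ large is dominated by $\exp(\tfrac{1}{2}\operatorname{Re}(\alpha)\,y^{2})$; hence the integrand is bounded by $\exp(-\tfrac{1}{2}\operatorname{Re}(\alpha)\,y^{2})$ and the tail of the rescaled integral is $O\!\left(\exp(-\tfrac{1}{4}\operatorname{Re}(\alpha)\sqrt{N+1})\right)$, which is super-polynomially small in $N+1$ and therefore negligible. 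The same super-polynomial bound shows that replacing each bulk Gaussian moment of $y^{2m}$ by the corresponding full-line moment $\int_{-\infty}^{\infty}e^{-\alpha y^{2}}y^{2m}\,dy$ introduces only a negligible error.

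On the bulk I would apply Taylor's theorem with Lagrange remainder to order $2M+1$,
\[g(x)=\sum_{k=0}^{2M+1}\frac{g^{(k)}(0)}{k!}x^{k}+\frac{g^{(2M+2)}(\xi_x)}{(2M+2)!}x^{2M+2},\]
valid uniformly for $|x|\le (N+1)^{-1/4}$; smoothness of $g$ supplies a constant $B$ with $|g^{(2M+2)}|\le B$ on a fixed neighborhood of $0$ that eventually contains this shrinking interval. Integrating the polynomial part against $e^{-\alpha y^{2}}$ over the symmetric bulk kills every odd monomial, while the even terms $k=2m$ contribute exactly $g^{(2m)}(0)/(2m)!$ times $(N+1)^{-m-1/2}$ times the full Gaussian moment, reproducing the asserted series. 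The remainder term contributes at most
\[\frac{B}{(2M+2)!\,(N+1)^{M+3/2}}\int_{-\infty}^{\infty}|e^{-\alpha y^{2}}|\,y^{2M+2}\,dy \;=\; O\!\left((N+1)^{-M-3/2}\right),\]
which is $o((N+1)^{-M-1/2})$. The main obstacle is purely bookkeeping: one must keep the three sources of error---tail cutoff, replacement of bulk moments by full ones, and Taylor remainder---organized so that each is strictly of smaller order than the $M$th listed term, and verify that the derivative bound on $g^{(2M+2)}$ and the exponential growth bound on $g$ interact compatibly with the rescaling.
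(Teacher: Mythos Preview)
The paper does not prove this lemma; it is simply stated as a standard instance of Laplace's method and then invoked. Your argument is correct and is precisely the classical proof: the rescaling $y=\sqrt{N+1}\,x$ reduces to a fixed Gaussian weight, the tail $|y|>(N+1)^{1/4}$ is exponentially small in $\sqrt{N+1}$ by the growth bound on $g$ together with $\operatorname{Re}\alpha>0$, and Taylor expansion on the shrinking bulk $|x|\le (N+1)^{-1/4}$ with a local uniform bound on $g^{(2M+2)}$ produces the stated series with remainder $O((N+1)^{-M-3/2})$. The only cosmetic point is that if $g$ is complex-valued the Lagrange remainder should be replaced by the integral form of the remainder (or applied to real and imaginary parts separately), which leaves the estimate unchanged.
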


Put these pieces together, we have
\begin{proposition}For any integer $M>0$,
\begin{align}
&\qquad\int_{C_{\phi_1+w_1}}\,dz_1\psi_1(z_1)\left(\frac{2\eta_l^2}{p_2^2\gamma
}+\frac{2(z_1-\eta_l)^2}{\beta}\right)\exp[(N+1)\theta (z_1,\eta_l)]\notag\\
&=\sum_{m=0}^{M} \frac{C_{m,N}^\prime}{(N+1)^{m+1/2}}+C_N^\prime+o\left(\frac 1{(N+1)^{M+1/2}}\right)\notag\end{align}
where $C_{m,N}^\prime$,$C_N^\prime$ depend on N periodically and have a common period. Also note that in the expansion the
exponents are all {\it half integers\/}.
\end{proposition}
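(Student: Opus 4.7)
The strategy is to combine a contour shift with the Gaussian asymptotic of Lemma 1: first push $C_{\phi_1+w_1}$ to the steepest–descent contour $C_{\phi_1+\zeta}$ passing through the critical point $\zeta$, collecting residues; then extract the half-integer series from the remaining integral via Lemma 1. The residue contributions become the single $N$-periodic $C_N^\prime$, and Lemma 1 produces the coefficients $C_{m,N}^\prime$ at each half-integer order $m+1/2$.

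For the shift, the residue theorem gives
\begin{align*}
\int_{C_{\phi_1+w_1}}\!\!\psi_1(z_1)F_N(z_1,\eta_l)\,dz_1 = \int_{C_{\phi_1+\zeta}}\!\!\psi_1(z_1)F_N(z_1,\eta_l)\,dz_1 \;\pm\; 2\pi i\!\sum_{\xi_m}\! \operatorname{Res}(\psi_1,\xi_m)\,F_N(\xi_m,\eta_l),
\end{align*}
where the sum runs over the finitely many poles $\xi_m=m\pi i/(2p_1)$ of $\psi_1$ in the strip. The values $\theta(\xi_m,\eta_l)$ are rational multiples of $\pi i$ (same verification as in Section 6.1 for the double sum), so each $\exp[(N+1)\theta(\xi_m,\eta_l)]$ is $N$-periodic with period depending only on $(p_1,q_1,p_2,q_2)$; the residues $\operatorname{Res}(\psi_1,\xi_m)$ are explicit constants, and their weighted sum is the asserted $C_N^\prime$.

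On $C_{\phi_1+\zeta}$, the substitution $z_1\mapsto z_1+\zeta$ together with $\theta(z_1+\zeta,\eta_l)=-\tfrac{2}{\beta\pi i}z_1^2+\theta(\zeta,\eta_l)$ reduces the integral to the form already displayed in the paper. Parameterizing $z_1=xe^{i\phi_1}$ with the sign of $\phi_1=\pm\pi/4$ chosen to match $\beta$ converts the quadratic into $-(N+1)\alpha x^2$ with $\operatorname{Re}\alpha>0$, and the $\sinh(2p_1 z_1)$ denominator in $\psi_1$ secures the exponential bound on $g(x)=e^{i\phi_1}\Psi(xe^{i\phi_1}+\zeta)$ required by Lemma 1. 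Applying the lemma yields
\begin{align*}
C_{m,N}^\prime = \exp\!\bigl[(N+1)\theta(\zeta,\eta_l)\bigr]\cdot\frac{g^{(2m)}(0)}{(2m)!}\int_{-\infty}^{\infty}e^{-\alpha x^2}x^{2m}\,dx,
\end{align*}
which immediately accounts for the half-integer exponents in the expansion.

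The main obstacle is certifying the periodicity of the prefactor $\exp[(N+1)\theta(\zeta,\eta_l)]$ and its compatibility with the residue periods. A direct calculation using $\zeta=(p_1\beta+l)\pi i/2$, $\eta_l=l\pi i/2$, $\gamma=q_2/p_2$, and the integrality of $p_1\beta=q_1-p_1p_2q_2$ gives
\begin{align*}
\theta(\zeta,\eta_l) = -\frac{l^2\pi i}{2p_2 q_2} - \frac{p_1^2\beta\,\pi i}{2} + p_1(p_1\beta+l)\pi i,
\end{align*}
a rational multiple of $\pi i$ with denominator bounded in terms of $(p_1,q_1,p_2,q_2)$ alone. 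Taking a common multiple of this period with those of the residue terms from Section 6.2 produces the common period asserted in the proposition, completing the proof.
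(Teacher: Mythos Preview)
Your proof is correct and follows essentially the same route as the paper: shift $C_{\phi_1+w_1}$ to $C_{\phi_1+\zeta}$, record the residue sum as the periodic constant $C_N'$, and then apply Lemma~1 to the translated integral to obtain the half-integer series. Your explicit computation of $\theta(\zeta,\eta_l)$ to verify its rationality in $\pi i$ (and hence the common periodicity) makes rigorous a point the paper leaves to the reader, but otherwise the argument matches the paper's in structure and detail.
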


\subsection{The summation over m}
The integral \[\int_{C_{\phi_2+w_2}}\,dz_2\psi_2(z_2)F_N(\xi_m, z_2)\] is similar. There is a slight difference though. The critical point of $\theta (\xi_m, z_2)$ is  \[\zeta'=\frac{p_2q_2m}{2q_1}\pi i\]
Unlike the previous case, $\zeta'$ could be a pole of $\psi_2(z_2)$.  Thus we need to subtract the singular part of $\psi_2(z_2)$ in order to shift the path of integral to $C_{\phi_2+\zeta'}$. Note that
$\psi_2(z_2)$ has only simple poles. So in addition to the estimation procedure listed above, we also need to consider  integral of the following form
\[\int_{C_{\phi_2+w_2}}\,dz_2\frac1{z_2-\zeta'}F_N(\xi_m, z_2)\]

We need the following result
\begin{lemma}
Let $\alpha$ be a complex number, $Re\alpha>0$, $\epsilon>0$, then
\begin{align}
&\quad \int_{R+i\epsilon}\, dz\frac1{z} \exp[-(N+1)\alpha z^2]\notag\\
&= -\int_{R-i\epsilon}\, dz\frac1{z} \exp[-(N+1)\alpha z^2]\notag\\
&=-\pi i\notag
\end{align}
where $R$ stands for the real line.
\end{lemma}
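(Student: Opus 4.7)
The plan is to derive both identities from a single residue computation combined with a limiting evaluation of the upper-line integral.

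First I would observe that $f(z) := \frac{1}{z}\exp[-(N+1)\alpha z^2]$ is meromorphic with a simple pole at $z=0$ of residue $1$, and that $\mathrm{Re}\,\alpha > 0$ forces $\mathrm{Re}(\alpha z^2) \to +\infty$ as $|\mathrm{Re}(z)| \to \infty$ along any horizontal line. Applying the residue theorem to a counterclockwise rectangle with horizontal sides on $R \pm i\epsilon$ (the vertical sides contributing vanishing Gaussian-small integrals as the rectangle widens) yields
$$\int_{R - i\epsilon} f(z)\, dz \;-\; \int_{R + i\epsilon} f(z)\, dz \;=\; 2\pi i.$$
This reduces the claim to proving $\int_{R + i\epsilon} f\, dz = -\pi i$. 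The same Cauchy-theorem argument, applied to rectangles lying entirely in the upper half-plane (where $f$ is holomorphic), shows moreover that this value is independent of the choice $\epsilon > 0$.

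I would compute it by letting $\epsilon \downarrow 0^+$. Writing $z = w + i\epsilon$ gives
$$\int_{R + i\epsilon} f\, dz \;=\; \int_{-\infty}^{\infty} \frac{\exp[-(N+1)\alpha (w + i\epsilon)^2]}{w + i\epsilon}\, dw,$$
and the Sokhotski--Plemelj identity $\lim_{\epsilon \downarrow 0^+}\frac{1}{w + i\epsilon} = \mathrm{PV}\,\frac{1}{w} - i\pi\,\delta(w)$, together with the Gaussian bound on the numerator, yields
$$\int_{R + i\epsilon} f\, dz \;=\; \mathrm{PV}\!\int_{-\infty}^{\infty} \frac{\exp[-(N+1)\alpha w^2]}{w}\, dw \;-\; i\pi.$$
The remaining principal-value integral vanishes because its integrand is odd in $w$ (the Gaussian factor is even).

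The hard part will be making the $\epsilon \downarrow 0^+$ passage rigorous, since Sokhotski--Plemelj is a statement about pairings with a fixed test function, while here the ``test function'' itself depends on $\epsilon$ through $(w + i\epsilon)^2$. I would handle this by splitting the range at $|w| = \delta$: on $|w| < \delta$ the exponential is uniformly close to $\exp[-(N+1)\alpha w^2]$, so the standard distributional limit applies directly and contributes $-i\pi$; on $|w| \geq \delta$ the factor $(w + i\epsilon)^{-1}$ is uniformly bounded, so dominated convergence (using the Gaussian envelope) gives a limit integral whose integrand is odd and hence vanishes. Combining this with the residue-theorem identity from the first step yields both asserted equalities.
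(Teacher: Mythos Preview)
The paper states this lemma without proof, so there is no argument to compare against directly. Your proof is correct: the residue-theorem step relating the two horizontal contours is sound (the vertical sides of the rectangle carry Gaussian-small contributions since $\mathrm{Re}\,\alpha>0$), the independence of $\epsilon$ in the upper half-plane follows from Cauchy's theorem, and your $\epsilon\downarrow 0$ limit can be made rigorous exactly as you sketch. Concretely, writing $h_\epsilon(w)=\exp[-(N+1)\alpha(w+i\epsilon)^2]$ and $h_0(w)=\exp[-(N+1)\alpha w^2]$, the correction term on $|w|<\delta$ is bounded by $\sup|h_\epsilon-h_0|\cdot\int_{-\delta}^{\delta}|w+i\epsilon|^{-1}\,dw = O(\epsilon)\cdot O(\log(1/\epsilon))\to 0$, while on $|w|\ge\delta$ dominated convergence applies and the limit integrand is odd.

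That said, the Sokhotski--Plemelj step is more work than necessary. The integrand $f(z)=z^{-1}\exp[-(N+1)\alpha z^2]$ is odd, so the substitution $z\mapsto -z$ sends the contour $R+i\epsilon$ to $R-i\epsilon$ traversed backwards and gives
\[
\int_{R+i\epsilon} f(z)\,dz \;=\; -\int_{R-i\epsilon} f(z)\,dz
\]
immediately. Combined with your residue identity $\int_{R-i\epsilon} f - \int_{R+i\epsilon} f = 2\pi i$, this yields both stated values without any limiting argument. This symmetry is presumably why the paper treats the lemma as routine and omits the proof.
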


Use this result, we can check that
\[\int_{C_{\phi_2+w_2}}\,dz_2\frac1{z_2-\zeta'}F_N(\xi_m, z_2)\]
contains two parts, one depends on $N$ periodically, the other one has the order $(N+1)^{-\frac1{2}}$.

So for
\[\int_{C_{\phi_2+w_2}}\,dz_2\psi_2(z_2)F_N(\xi_m, z_2)\]
we also get an asymptotic expansion of the same form as proposition 6.

\subsection{The double integral}\label{the double integral}

For the first part of $I_N$, we denote it by $I_N^\prime$.

By change of variables $z_1\mapsto z_1+w_1$, $z_2\mapsto z_2+w_2$ and some elementary calculations, we have:

\begin{align}
I_N^\prime&=\exp[(N+1)\theta(w_1,w_2)]\int_{C_{\phi_1}}\int_{C_{\phi_2}}\,dz_1\,dz_2\psi_1(z_1)\psi_2(z_2)(2p_1\pi i)z_1\notag\\
&\qquad\qquad\qquad\cdot \exp\left[(N+1)\left(-\frac {2(z_1-z_2)^2}{\beta \pi i} -\frac {2z_2^2}{p_2^2\gamma\pi
i}\right)\right]\notag\end{align}

The asymptotic behavior of the above double integral is given by:

\begin{lemma} Let $\alpha$ be a complex number, $Re\alpha>0$, $f(x_1,x_2)$ is a positive definite quadratic form of
$x_1,x_2$, $g(x_1,x_2)$ is a smooth function, and $|g(x_1,x_2)|< Ce^{\lambda_1 |x_1|+ \lambda_2 |x_2|}$ for some constants
$C,\lambda_1, \lambda_2$, then for any integer $M>0$, we have
\begin{align}
&\qquad\int_{-\infty}^{\infty}\int_{-\infty}^{\infty}\exp[-(N+1)\alpha f(x_1,x_2)]g(x_1,x_2)\,dx_1\,dx_2\notag\\
&=\sum_{m=0}^M \frac 1{(N+1)^{m+1}}\notag\\
&\quad\cdot\left(\sum_{k=0}^m \frac 1{k!(2m-k)!}\frac {\partial ^{2m}g}{\partial x_1^k \partial
x_2^{2m-k}}(0,0)\int_{-\infty}^{\infty}\int_{-\infty}^{\infty}\exp[-\alpha f(x_1,x_2)]x_1^k
x_2^{2m-k}\,dx_1\,dx_2\right)\notag\\
&\qquad\qquad\qquad +o\left(\frac 1{(N+1)^{M+1}}\right)\notag\\
&\qquad\qquad\qquad\qquad\qquad\qquad\qquad\qquad\qquad N\to \infty\notag\end{align}
\end{lemma}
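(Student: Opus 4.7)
The plan is to apply the Laplace (Gaussian localization) method in two variables. First I would rescale by setting $x_j = y_j/\sqrt{N+1}$, turning the integral into
\[
\frac{1}{N+1}\int_{-\infty}^{\infty}\int_{-\infty}^{\infty}\exp[-\alpha f(y_1,y_2)]\,g\!\left(\tfrac{y_1}{\sqrt{N+1}},\tfrac{y_2}{\sqrt{N+1}}\right)dy_1\,dy_2,
\]
so that all $N$-dependence is carried inside $g$ and the Gaussian weight is now fixed. Since $f$ is positive definite and $\operatorname{Re}\alpha>0$, there is a constant $c>0$ with $\operatorname{Re}(\alpha f(y))\ge c|y|^2$, hence $e^{-\alpha f(y)}$ has rapid decay and all its polynomial moments converge.

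Next, I would Taylor expand $g$ around the origin to order $2M$, write $g(u_1,u_2)=\sum_{i+j\le 2M}\frac{1}{i!j!}\partial_{x_1}^{i}\partial_{x_2}^{j}g(0,0)\,u_1^i u_2^j+R_{2M}(u_1,u_2)$, substitute $u_j=y_j/\sqrt{N+1}$, and integrate term by term. The generic monomial contributes $(N+1)^{-1-(i+j)/2}$ times a moment $\int\!\int e^{-\alpha f}y_1^iy_2^j$. The key observation is that $e^{-\alpha f}$ is invariant under $(y_1,y_2)\mapsto(-y_1,-y_2)$, so the moment vanishes whenever $i+j$ is odd; only terms of even total degree $i+j=2m$ survive, grouping exactly into the claimed coefficients at order $(N+1)^{-(m+1)}$.

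For the remainder, I would split the $x$-integration into a small disk $|x|<\delta$ and its complement. On the disk, smoothness of $g$ gives $|R_{2M}(x_1,x_2)|\le C'(|x_1|+|x_2|)^{2M+1}$ uniformly, which after rescaling and integration against the Gaussian produces an error of order $(N+1)^{-(M+3/2)}=o((N+1)^{-(M+1)})$. Outside the disk, the exponential bound $|g(x)|\le Ce^{\lambda_1|x_1|+\lambda_2|x_2|}$ combined with $\operatorname{Re}(\alpha f(x))\ge c|x|^2$ yields an integrand bounded by $C\exp[\lambda_1|x_1|+\lambda_2|x_2|-(N+1)c|x|^2]$, and a standard completion-of-square argument shows the integral over $|x|>\delta$ is $O(e^{-c'(N+1)})$ for some $c'>0$, which is negligible compared to any polynomial order.

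The main obstacle is the remainder estimate in the near-origin region: one must know that the $2M$-th order Taylor remainder behaves uniformly like $|x|^{2M+1}$ on a fixed neighborhood of the origin (using only smoothness of $g$, without bounds on higher derivatives globally), and that after the rescaling this estimate continues to hold on the expanding disk $|y|<\delta\sqrt{N+1}$ \emph{after} weighting by the Gaussian. The cleanest way is to apply Taylor's integral remainder formula together with $\sup_{|x|\le\delta}|\partial^{2M+1}g(x)|<\infty$ (finite by smoothness) to obtain the pointwise bound, then integrate against the rapidly decaying Gaussian. Once these two estimates are combined, matching the surviving even-degree Taylor terms against the stated formula completes the argument.
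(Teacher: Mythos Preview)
The paper does not supply a proof of this lemma; it is stated as a standard asymptotic fact (the two--variable analogue of the Laplace expansion already quoted as Lemma~1) and simply invoked. Your proposal fills in precisely the argument one expects: rescale to a fixed Gaussian, Taylor expand $g$ at the origin, use the parity of $e^{-\alpha f}$ under $(y_1,y_2)\mapsto(-y_1,-y_2)$ to kill the odd--total--degree moments, and control the remainder by a near/far split. This is correct and is the standard route.

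Two small points worth tightening. First, in the far region $|x|>\delta$ you bound $g$ itself, but the quantity actually left over there is the Taylor remainder $R_{2M}=g-P_{2M}$; since $P_{2M}$ is a fixed polynomial it is also dominated by $\exp[-(N+1)c|x|^2]$ on $|x|>\delta$, so the same exponential estimate applies --- just say so explicitly. Second, your Taylor expansion naturally produces \emph{all} pairs $(k,2m-k)$ with $0\le k\le 2m$, whereas the inner sum as printed in the paper runs only over $0\le k\le m$; this is almost certainly a typo in the statement rather than a defect in your argument, so when you say the surviving terms ``group exactly into the claimed coefficients'' you should note that the upper limit ought to be $2m$.
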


Put these together, \begin{proposition}For any integer $M>0$,
\[I_N^\prime=\sum_{m=0}^M \frac{C_{m,N}^{''}}{(N+1)^{m+2}}+o\left(\frac 1{(N+1)^{M+2}}\right),\qquad\qquad N\to \infty\]
where $C_{m,N}^{''}$ depend on N periodically and have a common period.
Note that the exponents in this expansion are all {\it integers\/}.
\end{proposition}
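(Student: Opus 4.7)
The strategy is to reduce $I_N'$ to a Gaussian double integral on $\mathbb R^2$ and invoke Lemma~3. I parametrize each contour by $z_j = t_j e^{i\phi}$ with $t_j\in\mathbb R$ and common angle $\phi=\phi_1=\phi_2\in\{\pm\pi/4\}$ chosen so its sign matches that of $\beta$ (and hence $\gamma$, since $\beta\gamma>0$). Under this parametrization $(z_1-z_2)^2 = e^{2i\phi}(t_1-t_2)^2$ and $z_2^2 = e^{2i\phi}t_2^2$, so the quadratic exponent collapses to $-\tfrac{2}{\pi}\,Q(t_1,t_2)$ with $Q(t_1,t_2) = \tfrac{(t_1-t_2)^2}{|\beta|}+\tfrac{t_2^2}{p_2^2|\gamma|}$ positive definite; the Jacobian contributes only the constant $e^{2i\phi}$.

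Lemma~3 requires smoothness and exponential growth of the amplitude
\[g(t_1,t_2) \;=\; \psi_1(t_1 e^{i\phi})\,\psi_2(t_2 e^{i\phi})\cdot 2p_1\pi i\cdot t_1 e^{i\phi}.\]
Smoothness holds because the poles of $\psi_1$ at $\tfrac{m\pi i}{2p_1}$ and of $\psi_2$ at $\tfrac{n\pi i}{2}$ all lie on the imaginary axis, which meets $C_\phi$ only at the origin, where both $\psi_j$ have removable singularities. The growth bound $|g|\le C e^{\lambda_1|t_1|+\lambda_2|t_2|}$ follows from standard estimates on $|\sinh(az)|$ applied to each hyperbolic factor of $\psi_1,\psi_2$ along $C_\phi$.

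Applying Lemma~3 yields
\[I_N' \;=\; \exp[(N+1)\theta(w_1,w_2)]\cdot e^{2i\phi}\sum_{m=0}^{M}\frac{A_m}{(N+1)^{m+1}}+o\!\left(\frac{1}{(N+1)^{M+1}}\right),\]
where each $A_m$ is a weighted sum of partial derivatives $g^{(k,2m-k)}(0,0)$ against Gaussian moments of $Q$. The structural input is a parity argument: $\psi_1$ is even and $\psi_2$ is odd, so together with the explicit factor of $t_1$, the amplitude $g$ is odd in $t_1$ and odd in $t_2$. Hence $g^{(k,2m-k)}(0,0)=0$ unless both $k$ and $2m-k$ are odd, which forces $m\ge 1$; in particular the $m=0$ term vanishes identically. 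Reindexing $m\mapsto m+1$ produces the claimed expansion in the integer powers $(N+1)^{-(m+2)}$.

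The $N$-dependence of $C''_{m,N}$ comes entirely from the prefactor $\exp[(N+1)\theta(w_1,w_2)]$. Using $w_1=\tfrac{q_1\pi i}{2}$, $w_2=\tfrac{p_1 p_2 q_2\pi i}{2}$, $w_1-w_2=\tfrac{p_1\beta\pi i}{2}$ together with $\beta=\tfrac{q_1}{p_1}-p_2 q_2$ and $\gamma=\tfrac{q_2}{p_2}$, a direct calculation reduces $\theta(w_1,w_2)$ to $\tfrac{p_1 q_1\pi i}{2}$, so the prefactor is periodic in $N$ with period dividing $4$. The main technical obstacle is the parity argument above: one must carefully track both the even/odd symmetries of $\psi_1,\psi_2$ and the extra $z_1$ factor to guarantee that the $m=0$ term vanishes (otherwise the expansion would start at $(N+1)^{-1}$) and that all surviving exponents are integers, not half-integers as in the one-dimensional residue expansions of Proposition~6.
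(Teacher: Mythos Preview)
Your argument is correct and follows essentially the same route as the paper: parametrize the contours, apply Lemma~3 to the displayed expression for $I_N'$, and observe that the leading $(N+1)^{-1}$ term vanishes so the expansion begins at $(N+1)^{-2}$. The paper compresses all of this into the phrase ``Put these together''; you have supplied the missing justification. Two remarks. First, the full separate-variable parity argument is more than is strictly needed: since the amplitude $g$ carries an explicit factor of $z_1$ and $\psi_2$ is odd, one already has $g(0,0)=0$, which alone kills the $m=0$ term of Lemma~3. Your stronger parity observation (odd in each variable) is correct and prunes additional individual summands at higher $m$, but the proposition only requires the leading term to vanish. Second, your explicit evaluation $\theta(w_1,w_2)=\tfrac{p_1q_1\pi i}{2}$ and the resulting period dividing $4$ is a genuine addition; the paper never computes this value and leaves the periodicity claim implicit.
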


Up to now, we have obtained asymptotic expansions of the four parts of $I_N$, hence an asymptotic expansions of $I_N$ itself.

\section{Proof of theorem 3}\label{proof}

\begin{proof}[Proof of Theorem 3] Now back to $I_N$, it decomposes into four parts ---a double integral, two summation of integrals
and a double summation. By expanding each of the four parts, we have
shown that (A) is true. The only question remains is whether there is a non-vanishing term. Note that those terms with positive integer
exponents come from $I_N^\prime$ only, and there is a nonzero term in the expansion of $I_N^\prime$.---Actually the first term in this expansion is nonzero as one can check with the help of the lemma below:

\begin{lemma}Let $\alpha$ be a complex number, $Re\alpha>0$, $f(x_1,x_2)$ is a positive definite quadratic form of
$x_1,x_2$,
\[\int_{-\infty}^{\infty}\int_{-\infty}^{\infty}\exp[-\alpha f(x_1,x_2)]x_1^k x_2^l\,dx_1\,dx_2=\left(\frac{\partial ^k}{\partial J_1^k}\frac{\partial ^l}{\partial J_2^l}E(J_1,J_2)\right)_{(J_1,J_2)=(0,0)}\]
where \[E(J_1,J_2)=\int_{-\infty}^{\infty}\int_{-\infty}^{\infty}\exp[-\alpha f(x_1,x_2)+J_1x_1+J_2x_2]\,dx_1\,dx_2\]
\end{lemma}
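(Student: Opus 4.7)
The plan is to view $E(J_1,J_2)$ as a moment generating function and to recover the mixed moments by differentiation under the integral. Formally, since
\[
\frac{\partial^{k+l}}{\partial J_1^k \partial J_2^l} \exp[-\alpha f(x_1,x_2)+J_1 x_1+J_2 x_2]
= x_1^k x_2^l \exp[-\alpha f(x_1,x_2)+J_1 x_1+J_2 x_2],
\]
if we can interchange the partial derivatives with the double integral defining $E$, then evaluating at $(J_1,J_2)=(0,0)$ immediately produces the claimed identity.

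The only real content is therefore to justify this interchange, which I would do by an application of dominated convergence (equivalently, by a standard Leibniz-rule argument for parameter integrals). Fix any bounded neighborhood $U=\{|J_1|\le R,\ |J_2|\le R\}$ of the origin. On $U$ we have the pointwise bound
\[
\left| x_1^k x_2^l\, \exp[-\alpha f(x_1,x_2)+J_1 x_1+J_2 x_2]\right|
\le |x_1|^k |x_2|^l \exp[-\operatorname{Re}(\alpha)\, f(x_1,x_2)+R(|x_1|+|x_2|)].
\]
Since $\operatorname{Re}\alpha>0$ and $f$ is a positive definite quadratic form, $\operatorname{Re}(\alpha)\,f(x_1,x_2)$ dominates any linear expression in $(x_1,x_2)$ for large $|(x_1,x_2)|$, so the right hand side is an $L^1(\mathbb{R}^2)$ majorant independent of $(J_1,J_2)\in U$. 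This majorant legitimizes differentiating under the integral sign arbitrarily many times in $J_1,J_2$ on $U$, and in particular gives
\[
\frac{\partial^{k+l} E}{\partial J_1^k \partial J_2^l}(J_1,J_2)
= \int_{-\infty}^{\infty}\!\int_{-\infty}^{\infty} x_1^k x_2^l \exp[-\alpha f(x_1,x_2)+J_1 x_1+J_2 x_2]\,dx_1\,dx_2,
\]
valid for all $(J_1,J_2)\in U$. Setting $(J_1,J_2)=(0,0)$ yields the lemma.

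The only technical subtlety, and thus the sole potential obstacle, is establishing the Gaussian majorant: one must use that a positive definite quadratic form $f$ satisfies $f(x_1,x_2)\ge c(x_1^2+x_2^2)$ for some $c>0$, so that $\operatorname{Re}(\alpha)\,f(x_1,x_2)-R(|x_1|+|x_2|)\to+\infty$ as $|(x_1,x_2)|\to\infty$. Once that quadratic lower bound is recorded, the rest of the argument is the textbook differentiation-under-the-integral routine and requires no further calculation.
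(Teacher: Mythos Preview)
Your argument is correct: this is precisely the standard moment-generating-function identity, and your justification via dominated convergence (using $f(x_1,x_2)\ge c(x_1^2+x_2^2)$ from positive definiteness, together with $\operatorname{Re}\alpha>0$) is exactly what is needed to legitimize differentiation under the integral sign. The paper itself states this lemma without proof, treating it as a standard computational device, so your write-up in fact supplies more detail than the paper does.
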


Thus by explicit calculation, we can check that

\[\int_{C_{\phi_1}}\int_{C_{\phi_2}}\,dz_1\,dz_2 z_2z_1 \exp\left(-\frac {2(z_1-z_2)^2}{\beta \pi i} -\frac {2z_2^2}{p_2^2\gamma\pi
i}\right)\ne 0\]

  Hence under the condition that $\beta\gamma>0$, combining (5.2), proposition 4 and (A),
finally we have an nontrivial expansion of \[\left(\frac{J_N^\sigma
(A,K)}{[N+1]}\right)_{A=\exp\left(\frac{\pi i}{2(N+1)}\right)}\] of the form claimed in theorem 3.
\end{proof}


\begin{thebibliography}{99}
  \bibitem{BHMVinv} Blanchet, C.; Habegger, N.; Masbaum, G.; and Vogel, P., {\sl Three-manifold invariants derived from the Kauffman bracket}. Topology 1992 {\bf 31}, no. 4, 685--699.

  \bibitem{BHMV} Blanchet, C.; Habegger, N.; Masbaum, G.; and Vogel, P., {\sl Topological quantum field theories derived from the Kauffman bracket}. Topology 1995 {\bf 34}, no. 4, 883--927.

  \bibitem{KT} Kashaev, R.M.; Tirkonnen, O., {\sl  Proof of the Volume Conjecture for Torus
Knots}. Journal of Mathematical Sciences 2003 {\bf 115}, no.1,
2033-2036

  \bibitem{LBook} Lickorish, W. B. R., An introduction to knot theory. Graduate Texts in Mathematics,{\bf 175}. Springer-Verlag, New York, 1997.

  \bibitem{LSurvey} Lickorish, W. B. R.,  {\sl Quantum invariants of 3-manifolds}.  Handbook of geometric topology,  707--734, North-Holland, Amsterdam, 2002.

  \bibitem{L2r} Lickorish, W. B. R., {\sl The skein method for three-manifold invariants}. J. Knot Theory Ramifications 1993 {\bf 2} , no. 2, 171-194.

  \bibitem{QL}Liu, Q., {\sl On the colored Jones polynomials of certain links}. Ph.D. thesis, 2008.

  \bibitem{MV} Masbaum, G., and Vogel, P., {\sl 3-valent graphs and the Kauffman bracket}.  Pacific J. Math 1994 {\bf 164}, 361-381.
  
  \bibitem{MO} Morton, H.R., {\sl The colored Jones function and Alexander polynomial for the torus knots}. Proc.Cambridge Philos.Soc. 1995 {\bf 117}, no.1, 129-135.
  
  \bibitem{MM} Murakami, H., and Murakami, J.,{\sl The colored Jones polynomials and the simplicial volume of a
knot}. Acta Math. 2001, {\bf 186}, 85-104.

  \bibitem{VV} van der Veen, R., {\sl A cabling formula for the colored Jones polynomial}. preprint, arXiv:math.GT/0807.2679. 



\end{thebibliography}
\end{document}